\documentclass[11pt]{article}

\usepackage[margin=1in]{geometry}
\usepackage{amsmath,amssymb,amsthm,mathtools,bm}
\usepackage{mathrsfs}
\usepackage{enumitem}
\usepackage{comment}
\usepackage{hyperref}
\usepackage[nameinlink,noabbrev]{cleveref}
\usepackage{tikz-cd}
\usepackage{xcolor}
\usepackage{graphicx}
\usepackage{subcaption}
\usetikzlibrary{decorations.pathmorphing}
\tikzstyle{vertexB}=[circle,draw, minimum size=18pt, scale=0.55, inner sep=0.0pt]
\tikzstyle{vertexS}=[circle,draw, minimum size=18pt, scale=0.52, inner sep=0.0pt]
\tikzstyle{arc}=[->,line width=0.03cm]
\usepackage{thmtools, thm-restate}
\usepackage{float}

\newenvironment{mycase}[1]{%
    \par\addvspace{\medskipamount}%
    \noindent\textbf{#1}\quad%
}{%
    \par\addvspace{\medskipamount}%
}

\hypersetup{
  colorlinks=true,
  linkcolor=blue!60!black,
  citecolor=blue!60!black,
  urlcolor=blue!60!black
}

\newtheorem{theorem}{Theorem}[section]
\newtheorem{lemma}[theorem]{Lemma}
\newtheorem{proposition}[theorem]{Proposition}

\newtheorem{claim}[theorem]{Claim}
\theoremstyle{definition}

\newenvironment{poc}
  {\begin{proof}[Proof of claim]}
  {\end{proof}}

\newcommand{\disc}{\operatorname{disc}}

\newcommand{\YZ}[1]{{\color{blue}#1}}

\title{A discrepancy dichotomy for 1-factorizations of signed complete bipartite graphs}
\author{Yisai Xue\thanks{School of Mathematics and Statistics, Ningbo University, Ningbo, China. Supported by the National Natural Science Foundation of China (No. 12501486). Emails: \texttt{xueyisai@nbu.edu.cn}.}
\hspace{10mm} 
Yacong Zhou\thanks{Shenzhen Institutes of Advanced Technology, Chinese Academy of Sciences. Emails: {\tt yacong.zhou96@gmail.com}}}
\date{\today}

\begin{document}

\maketitle
\begin{abstract}
Given a signing $\sigma\colon E(K_{n,n})\to\{-1,+1\}$ of the complete
bipartite graph, when does $K_{n,n}$ admit a $1$-factorization in
which \emph{every} perfect matching has discrepancy bounded below by a
positive absolute constant?

Unlike the complete-graph case resolved by Ai, He, Im, and
Lee, the bipartite setting carries an unavoidable
obstruction: any \emph{balanced one-sided signing}---one whose edge
signs depend on a single bipartition class, with the two labels split
as evenly as possible---forces every perfect matching to have
discrepancy at most $1/n$. We prove that this is essentially the only obstruction:

For every $\varepsilon>0$ there exists $c=c(\varepsilon)>0$ such that, for all sufficiently large $n$, every signing of $K_{n,n}$ either
\begin{enumerate}
    \item[\rm(i)] admits a $1$-factorization in which every perfect
    matching has discrepancy at least $c$, or
    \item[\rm(ii)] is $\varepsilon$-close, in normalized Hamming
    distance, to a balanced one-sided signing.
\end{enumerate}
A key ingredient is a spectral stability argument forcing the sign matrix to be close to a balanced
one-sided pattern when both the overall discrepancy and the density of local switching patterns are small.
\end{abstract}
 
\section{Introduction}

The study of discrepancy theory originated with Weyl's 1916
paper on the equidistribution of sequences modulo one~\cite{weyl1916},
and has since developed into a rich subject with applications across
number theory, combinatorics, ergodic theory, discrete geometry, and
statistics; for thorough surveys we refer the reader to the monograph
of Beck and Chen~\cite{beck1987irregularities} and the handbook chapter
of Alexander, Beck, and Chen~\cite{alexander2017geometric}.

Discrepancy questions in graph theory ask whether, given an edge signing
of a large host graph, one can locate a structured subgraph whose signs
exhibit a nontrivial imbalance. More precisely, for a graph $H$ and a
signing $\sigma:E(H)\to\{+1,-1\}$, let
\[
S_\sigma(H):=\sum_{e\in E(H)}\sigma(e).
\]
The \emph{(normalized) discrepancy} of $H$ (with respect to $\sigma$) is then
\[
\disc(H):=\frac{|S_\sigma(H)|}{|E(H)|}.
\]

Many results in this direction show that, under suitable density or degree
conditions, every signing of a host graph contains a single spanning subgraph
of large discrepancy. Such results are known for Hamilton cycles~\cite{freschi2021note,gishboliner2022discrepancies}, perfect
matchings~\cite{balogh2020discrepancies}, spanning trees~\cite{erdos1995discrepancy,erdos1971imbalances}, clique factors~\cite{balogh2021discrepancy}, $H$-factors~\cite{bradac2024hfactor}, powers of Hamilton cycles~\cite{bradac2022powers}, oriented settings~\cite{ai2025oriented,chang2026oretype,FRESCHI2024338,gishboliner2023oriented}, and hypergraph settings~\cite{gishboliner2024tight}.

A common feature of all of these results is that they seek a \emph{single}
subgraph of large discrepancy inside a signed host graph. A more demanding
variant---and the one we pursue here---asks instead for an entire
\emph{decomposition} of $G$ in which \emph{every} piece carries large
discrepancy. A particularly natural setting is that of $1$-factorizations:
given a graph whose edges are signed by $\{-1,+1\}$, can one decompose the
graph into perfect matchings in such a way that \emph{every} matching has
large discrepancy?

For complete graphs, this question was recently studied by Ai, He, Im, and
Lee~\cite{ai2025high}, who proved that, for $n$ sufficiently large, every
signing of $K_{2n}$ admits a $1$-factorization in which every perfect
matching has discrepancy bounded away from zero by a universal constant.
It is natural to ask whether an analogous statement holds for complete
bipartite graphs, the most direct bipartite analogue of $K_{2n}$.

The bipartite setting, however, comes with a natural obstruction: if
$\sigma$ depends only on one side of the bipartition and assigns the two
signs in equal numbers, then every perfect matching has discrepancy at
most $1/n$. We call such a $\sigma$ a \emph{balanced one-sided signing}
(see Section~\ref{sec:setup} for the formal definition). Our main result
shows that this is essentially the only obstruction.

Let $K_{n,n}=(X\cup Y,E)$ denote the complete bipartite graph with parts
$X$ and $Y$ of size $n$, and let
\[
\mathcal{K}_{n,n}^{\pm}
:=\bigl\{(K_{n,n},\sigma):\sigma\colon E\to\{+1,-1\}\bigr\}
\]
denote the family of \emph{signed complete bipartite graphs}; we write a
generic element as $G=(K_{n,n},\sigma)$. We are now in a position to state our main result.

\begin{theorem}\label{thm-main}
For every $0<\varepsilon\le 1$, there exists
$c=c(\varepsilon)\geq \varepsilon^{4}/640000$ such that, for all
sufficiently large $n$, every $G=(K_{n,n},\sigma)\in\mathcal{K}_{n,n}^{\pm}$
satisfies at least one of the following:
\begin{enumerate}
    \item[\rm(i)] $G$ admits a $1$-factorization in which every perfect
    matching has discrepancy at least $c$;
    \item[\rm(ii)] $\sigma$ is $\varepsilon$-close, in normalized Hamming
    distance, to a balanced one-sided signing.
\end{enumerate}
\end{theorem}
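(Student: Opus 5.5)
We view $\sigma$ as an $n\times n$ sign matrix $A=(\sigma(x_iy_j))$. A $1$-factorization of $K_{n,n}$ is the same as a Latin square $L$, where matching $k$ consists of the cells with $L(i,j)=k$. We therefore need a Latin square in which every symbol class has $|\sum_{L(i,j)=k}A_{ij}|\ge cn$. We split according to two parameters: the overall bias $\delta:=|\sum_{i,j}A_{ij}|/n^2$, and the density $\rho$ of \emph{switching patterns}, meaning $2\times2$ submatrices $\{i,i'\}\times\{j,j'\}$ with $A_{ij}+A_{i'j'}\neq A_{ij'}+A_{i'j}$. A switching pattern is exactly a place where a $2$-cycle swap of two matchings changes each of their sums by $\pm 2$.

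\textbf{Case 1: $\delta\ge \varepsilon^2/C$.} Start from a uniformly random Latin square, or from the cyclic square $L(i,j)=i+j$ composed with random row and column permutations. Each matching then has expected sum $\delta n$ in absolute value, with the sign agreeing with the global sign. Concentration (Azuma's inequality or permutation concentration) gives deviations of order $\sqrt{n\log n}$, so with positive probability every matching has discrepancy at least $\delta/2\ge c$. All matchings get the same sign here, which is exactly what we want.

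\textbf{Case 2: $\delta$ small but $\rho\ge \varepsilon^2/C$.} Now we need a switching argument. Begin with a random cyclic-type Latin square, in which each matching has sum roughly $\delta n$, which may be near zero. Pair up the $n$ matchings as $(M_{2t-1},M_{2t})$. For each pair, the union $M_{2t-1}\cup M_{2t}$ is a union of even cycles. After reordering, we can arrange that a positive fraction of these cycles are $4$-cycles, for instance by building the square from a product of a $2\times2$ Latin square with an $(n/2)\times(n/2)$ one. Each $4$-cycle whose cells form a switching pattern can be independently swapped or left alone, and swapping it shifts $\sum M_{2t-1}$ by $\pm2$ and $\sum M_{2t}$ by $\mp2$. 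Because $\rho n^4$ quadruples are switching patterns, a random choice of the product structure gives every pair $\Omega(\rho n)$ switchable $4$-cycles, again by concentration. Greedily flipping them lets us push the two sums apart, so that $\sum M_{2t-1}\ge c'n$ and $\sum M_{2t}\le -c'n$ with $c'\approx\rho/8$. Every matching then has discrepancy at least $c'$. (If $n$ is odd, one leftover matching is handled by grouping it with a triple.)

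\textbf{Case 3: $\delta$ and $\rho$ both small.} This is the stability step and the main obstacle. The quantity $\rho$ is a fourth-moment expression: $\sum_{i,i',j,j'}(A_{ij}-A_{ij'}-A_{i'j}+A_{i'j'})^2$ equals $16$ times the number of switching patterns. Expanding, it equals $4n^2\|A\|_F^2-4n\|A\mathbf 1\|^2-4n\|A^T\mathbf 1\|^2+4(\mathbf 1^TA\mathbf 1)^2$. Hence small $\rho$ forces $A$ to be close in Frobenius norm to the rank-$\le2$ additive matrix $r\mathbf 1^T+\mathbf 1 s^T$, with $r_i,s_j$ the row and column means. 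Since the entries are $\pm1$, near-additivity forces, outside an $O(\sqrt\rho)$ fraction of cells, either $r_i$ to be nearly $\pm1$ with all $s_j\approx0$, or the symmetric situation with rows and columns exchanged. A mixed pattern would produce $\pm1\pm1$ sums incompatible with $\pm1$ entries on a positive fraction of cells. Small $\delta$ then forces the $+$ and $-$ rows (respectively columns) to be nearly balanced. Rounding gives a balanced one-sided signing within Hamming distance $O(\sqrt\rho+\delta)\le\varepsilon$, which is alternative (ii).

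Choosing the thresholds as $\delta,\rho\approx\varepsilon^2/C$ so that Case 3 yields $\varepsilon$-closeness, Cases 1 and 2 give $c$ polynomial in $\varepsilon$. Careful bookkeeping of the constants should then give a bound of the form $\varepsilon^4/640000$.
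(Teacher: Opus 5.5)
Your Case 1 and the even-$n$ part of Case 2 are essentially the paper's arguments. Case 1 uses a cyclic Latin square twisted by a random permutation, plus concentration. Case 2 pairs up vertices on each side into blocks, lays a randomly permuted cyclic Latin square on the blocks to get $C_4$-factors with $\Omega(\rho n)$ switchers each, and flips switchers of one direction class.

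\textbf{The genuine gap is odd $n$.} Your parenthetical about ``grouping it with a triple'' does not handle it. For odd $n$, $K_{n,n}$ has no $C_4$-factor. You therefore need an actual construction of a $1$-factorization in which $(n-1)/2$ pairs of matchings have unions with linearly many $C_4$-components, in a form that can still be randomized to spread the switchers evenly. More seriously, the leftover matching must itself have discrepancy at least $c$, and nothing in your argument provides this. In this regime $\disc(G)$ can be $0$, so an arbitrary leftover matching can have discrepancy $0$.

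The paper handles this in three steps:
\begin{enumerate}
\item It first fixes a matching $I$ with $\disc(I)\ge\eta/2$. Take a maximal family of vertex-disjoint switchers $Q_i$, complete it by a common matching $R$ to $M^\pm=R\cup\bigcup Q_i^\pm$, and note that one of these two has large discrepancy.
\item It then decomposes $K_{n,n}-I\cong B(K_n)$ into $2$-factors $\frac{n-3}{2}C_4\cup C_6$ or $\frac{n-5}{2}C_4\cup C_{10}$. This comes from lifting an Oberwolfach $2$-factorization of $K_n$ through the canonical double cover.
\item Finally it applies one uniformly random permutation to both sides at once, which fixes $I$, together with the Ai--He--Im--Lee tuple-concentration lemma.
\end{enumerate}
A ``triple'' approach would need a comparable design. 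It would also need an argument that three sums can be pushed away from $0$ simultaneously, using vertex-disjoint switchers between different pairs of the three matchings. Neither is supplied.

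Two minor points in Case 2: a flip changes the sums by $\pm2$ or $\pm4$, not only $\pm2$. Also, you only need both sums large in absolute value, not of opposite sign.

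\textbf{Case 3 takes a different route from the paper, but its decisive step is only asserted.} The paper argues spectrally. From $\operatorname{tr}((MM^T)^2)=n^4-8s_2(G)$ it gets $\sigma_1\ge\sqrt{1-\alpha^2/8}\,n$, so $M$ is $2\alpha$-close to $\mathbf x\mathbf y^T$, built from the sign patterns of the top singular vectors. The Type-1 count then forces one of $\mathbf x,\mathbf y$ to be nearly constant, and $\disc(G)$ forces the other to be nearly balanced.

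Your double-centering expansion is algebraically correct. However, its left side equals $16s_2(G)+64s_1(G)$, not $16s(G)$; this only affects constants. What is missing is exactly the step you call the main obstacle: that a $\pm1$ matrix close in Frobenius norm to an additive matrix $p\mathbf 1^T+\mathbf 1q^T$ is $O(\sqrt\rho)$-close to a one-sided one. The one-line remark about ``mixed patterns'' is not a proof.

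It can be completed by working with pairs of rows. The switchers on rows $\{i,i'\}$ are exactly the column pairs on which $R_i-R_{i'}\in\{0,\pm2\}^n$ takes different values. So few switchers force most row pairs to be either nearly equal or nearly constant and opposite. If a typical row is not nearly constant, most rows are close to it, giving $\mathbf 1\mathbf z^T$. Otherwise most rows are nearly constant, giving $\mathbf z\mathbf 1^T$.

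Finally, the constant $\varepsilon^4/640000$ is only asserted. Note that for $\varepsilon\ge 1/2$ alternative (ii) holds trivially, since one of $\pm\mathbf 1\mathbf z^T$ is within $n^2/2$ of $M$. So only $\varepsilon<1/2$ actually requires bookkeeping.
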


Our proof proceeds via a structural dichotomy driven by two global
parameters of $\sigma$: its total discrepancy and the density of a certain
local switching pattern. When either parameter is large, we explicitly
construct a $1$-factorization in which every perfect matching has
discrepancy at least $c$, placing $G$ in case~(i). When both parameters
are small, we show that $\sigma$ must already lie close to a balanced
one-sided signing, placing it in case~(ii).

\section{Setup and reduction to three cases}\label{sec:setup}

In this section we fix notation, define the central concepts used
throughout the paper, and reduce Theorem~\ref{thm-main} to three
auxiliary results, proved in Sections~\ref{sec:disclarge},
\ref{sec:nofswitcherslarge}, and~\ref{sec:bothsmall}.

In the proofs, it is convenient to identify a signing $\sigma$
with its \emph{bipartite sign matrix}
$M_\sigma\in\{-1,+1\}^{n\times n}$, whose $(i,j)$ entry is
$\sigma(x_iy_j)$. We write $\mathbf{1}$ for the all-ones vector of
length~$n$. A vector $\mathbf{z}\in\{-1,+1\}^n$ is \emph{balanced} if the numbers of its $+1$ and $-1$ entries differ by at most one. A signing $\sigma$ is a \emph{balanced one-sided signing} if
$M_\sigma\in\{\mathbf{z}\mathbf{1}^{T},\ \mathbf{1}\mathbf{z}^{T}\}$
for some balanced $\mathbf{z}$.

For two signings $\sigma,\sigma'$ we say that $\sigma$ is
\emph{$\varepsilon$-close} to $\sigma'$ if $M_\sigma$ and $M_{\sigma'}$
differ in at most $\varepsilon n^{2}$ entries; $\sigma$ is
$\varepsilon$-close to a family of signings if it is
$\varepsilon$-close to some member of that family.

Figure~\ref{fig:bsa} illustrates a balanced one-sided signing depending
on the $X$-side.

\begin{figure}[H]
\centering
\begin{tikzpicture}[
    scale=0.9,
    x=1cm,y=1cm,
    vertex/.style={circle, fill=black, inner sep=1.6pt},
    block/.style={rounded corners=8pt, draw=black, line width=0.8pt},
    posedge/.style={blue, thick},
    negedge/.style={red, thick}
]

\def\xL{0}
\def\xR{4.4}

\fill[blue!8, rounded corners=8pt] (-0.5,0.2) rectangle (0.5,2.5);
\fill[red!8, rounded corners=8pt] (-0.5,-2.5) rectangle (0.5,-0.2);
\draw[block] (-0.5,-2.5) rectangle (0.5,2.5);
\draw[line width=0.6pt] (-0.5,0) -- (0.5,0);

\fill[gray!5, rounded corners=8pt] (\xR-0.5,-2.5) rectangle (\xR+0.5,2.5);
\draw[block] (\xR-0.5,-2.5) rectangle (\xR+0.5,2.5);

\node[vertex] (xp1) at (\xL,1.8) {};
\node[vertex] (xp2) at (\xL,0.9) {};

\node[vertex] (xm1) at (\xL,-0.9) {};
\node[vertex] (xm2) at (\xL,-1.8) {};

\node[vertex] (y1) at (\xR,1.8) {};
\node[vertex] (y2) at (\xR,0.6) {};
\node[vertex] (y3) at (\xR,-0.6) {};
\node[vertex] (y4) at (\xR,-1.8) {};

\foreach \u in {xp1,xp2}{
    \foreach \v in {y1,y2,y3,y4}{
        \draw[posedge] (\u) -- (\v);
    }
}

\foreach \u in {xm1,xm2}{
    \foreach \v in {y1,y2,y3,y4}{
        \draw[negedge] (\u) -- (\v);
    }
}

\foreach \u in {xp1,xp2,xm1,xm2,y1,y2,y3,y4}{
    \node[vertex] at (\u) {};
}

\node[font=\Large] at (\xL,3.05) {$X$};
\node[font=\Large] at (\xR,3.05) {$Y$};

\node[font=\large, blue!70!black] at (-1.0,1.35) {$X^+$};
\node[font=\large, red!75!black] at (-1.0,-1.35) {$X^-$};

\node[font=\large, blue!70!black] at (2.2,1.6) {$+1$};
\node[font=\large, red!75!black] at (2.2,-1.6) {$-1$};

\end{tikzpicture}
\caption{A balanced one-sided signing of $K_{n,n}$. All edges incident to
vertices in $X^+$ have sign $+1$, while all edges incident to vertices in
$X^-$ have sign $-1$.}
\label{fig:bsa}
\end{figure}

A direct calculation shows that, for any balanced one-sided signing,
every $1$-factorization $\{M_t:t\in[n]\}$ satisfies
\[
\disc(M_t)=
\begin{cases}
0, & n \text{ even},\\
1/n, & n \text{ odd},
\end{cases}
\qquad\text{for every }t\in[n].
\]
In particular, no $1$-factorization of a balanced one-sided signing
can have all of its matchings with discrepancy bounded below by a
positive absolute constant.

We call a 4-cycle $C$ a \emph{switcher} if its 1-factorization $\{\psi_1, \psi_2\}$ satisfies $S_\sigma(\psi_1) \neq S_\sigma(\psi_2)$. If the union $M_1 \cup M_2$ of two matchings contains a switcher with $\psi_i \subseteq M_i$, we may \emph{apply} it by swapping $\psi_1$ and $\psi_2$ between the two matchings; this modifies $M_1$ and $M_2$ simultaneously and thereby adjusts their discrepancy.
We denote by $s(G)$ the number of switchers. 
Our argument tracks two global parameters of the signing: the overall
discrepancy $\disc(G)$ and the number of switchers.

Theorem~\ref{thm-main} reduces to three regimes:
\begin{enumerate}
    \item[\rm(a)] $\disc(G)$ is large;
    \item[\rm(b)] $s(G)$ is large;
    \item[\rm(c)] both $\disc(G)$ and $s(G)$ are small.
\end{enumerate}
In cases~(a) and~(b) we produce a $1$-factorization with uniformly
large discrepancy; in case~(c) we show that the signing must already
be close to a balanced one-sided signing. The corresponding three
auxiliary results follow.

\begin{restatable}{theorem}{theorema}\label{lem:disclarge}
Let $G=(K_{n,n},\sigma)\in\mathcal{K}_{n,n}^{\pm}$ with $n$
sufficiently large. If $\disc(G)=c>0$, then $G$ admits a
$1$-factorization $E(G)=\bigcup_{t=0}^{n-1}E(M_t)$ in which
$|\disc(M_t)-c|\leq 6n^{-1/4}$ for every $t\in\mathbb{Z}_n$.
\end{restatable}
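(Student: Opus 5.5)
We work with the cyclic $1$-factorization indexed by $\mathbb{Z}_n$, after randomly permuting one side. Label $X=\{x_0,\dots,x_{n-1}\}$ and $Y=\{y_0,\dots,y_{n-1}\}$. Choose a uniformly random permutation $\pi$ of $\mathbb{Z}_n$, and for each $t\in\mathbb{Z}_n$ let
\[
M_t=\{x_i y_{\pi(i+t)} : i\in\mathbb{Z}_n\}.
\]
For each fixed $i$, the map $t\mapsto \pi(i+t)$ is a bijection onto $\mathbb{Z}_n$. Hence the $M_t$ are pairwise edge-disjoint perfect matchings covering $E(K_{n,n})$, so they form a $1$-factorization. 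Note that $\disc(M_t)=|S_\sigma(M_t)|/n$, and set $\bar S:=S_\sigma(K_{n,n})/n$. Thus $|\bar S|/n=c$, and since $\bigl||a|-|b|\bigr|\le|a-b|$, it suffices to show that with positive probability
\[
|S_\sigma(M_t)-\bar S|\le 6n^{3/4}\quad\text{for all } t.
\]

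\textbf{Fixed $t$.} The map $i\mapsto \pi(i+t)$ is a uniformly random bijection $X\to Y$. Therefore $M_t$ is a uniformly random perfect matching and $\mathbb{E}[S_\sigma(M_t)]=\sum_{i,j}\sigma(x_iy_j)/n=\bar S$. For concentration we view $S_\sigma(M_t)$ as a function of a random permutation. Applying a transposition to the permutation changes two matching edges, so it changes the sum by at most $4$. McDiarmid's inequality for random permutations (equivalently, Azuma applied to the Doob martingale revealing $\pi(t),\pi(t+1),\dots$ in turn, whose increments are bounded by a constant such as $8$) gives
\[
\Pr\bigl(|S_\sigma(M_t)-\bar S|\ge\lambda\bigr)\le 2\exp\bigl(-\lambda^2/(Cn)\bigr)
\]
for an absolute constant $C$.

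\textbf{Union bound.} Take $\lambda=6n^{3/4}$. The failure probability for a single $t$ is then $2\exp(-36n^{1/2}/C)$. A union bound over the $n$ values of $t$ gives total failure probability at most $2n\exp(-36\sqrt n/C)<1$ for $n$ sufficiently large. So some $\pi$ works, and dividing by $n$ yields $|\disc(M_t)-c|\le 6n^{-1/4}$ for every $t$. The choice of exponent $3/4$ simply leaves a lot of slack, and any deviation of order $n^{1/2+\delta}$ would suffice.

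\textbf{Main obstacle.} The crux is the concentration step: the Lipschitz constant of the martingale must be stated correctly for a random bijection. A clean route is the standard bound for a uniformly random permutation $\tau$. Because swapping two values of $\tau$ alters $\sum_i \sigma(x_i y_{\tau(i)})$ by at most $4$, the sum satisfies $\Pr(|f-\mathbb{E}f|\ge\lambda)\le 2e^{-\lambda^2/(32n)}$ (e.g.\ McDiarmid 1998, or via the switching-based martingale argument). With this bound, $36/32>1$ and the union bound closes easily.
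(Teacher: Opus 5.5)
Your proof is correct and follows the paper's argument: both use the same random cyclic $1$-factorization $M_t=\{x_iy_{\pi(i+t)}\}$, concentration of each $S_\sigma(M_t)$ (equivalently, of the number $h_t$ of positive edges) to within $O(n^{3/4})$ of its mean, and a union bound over $t$. The only difference is the concentration tool. You use the bounded-differences (McDiarmid/Azuma) inequality for random permutations, which needs only the transposition-Lipschitz property and concentrates directly around the mean. The paper instead invokes Talagrand's inequality, which also requires the certificate condition, concentrates around the median, and then needs an extra tail-integration step to pass from median to mean; your route is slightly shorter.
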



\begin{restatable}{theorem}{theoremb}\label{thm:low-disc-many-sw}
Let $G=(K_{n,n},\sigma)\in\mathcal{K}_{n,n}^{\pm}$ with $n$
sufficiently large. If $s(G)\ge\eta n^{4}$, then $G$ admits a
$1$-factorization $E(G)=\bigcup_{t=1}^{n}E(M_t)$ in which
$\disc(M_t)\ge \eta/8-3/n$ for every $t\in[n]$.
\end{restatable}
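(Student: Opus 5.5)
We would start from a fixed $1$-factorization with a very rigid pairing structure, and use random relabelings so that every pair of paired matchings contains many switchers. Assume first that $n$ is even. Choose a uniformly random bijection $\pi\colon X\to\mathbb{Z}_n$, written $x\mapsto\pi(x)$, and label $Y=\{y_a:a\in\mathbb{Z}_n\}$ arbitrarily. Set
\[
M_t=\{x\,y_{\pi(x)+t}: x\in X\},\qquad t\in\mathbb{Z}_n .
\]
Take $x,x'$ with $\pi(x')=\pi(x)+n/2$, and pair $M_t$ with $M_{t+n/2}$. Then $M_t\cup M_{t+n/2}$ is a disjoint union of exactly $n/2$ four-cycles, namely
\[
C_{t}(x,x')=x\,y_{\pi(x)+t}\,x'\,y_{\pi(x)+t+n/2}.
\]
Each such cycle has its two perfect matchings split as $\psi_1\subseteq M_t$ and $\psi_2\subseteq M_{t+n/2}$. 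Swapping $\psi_1$ and $\psi_2$ in any subset of these cycles again gives a $1$-factorization.

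\textbf{Step 1 (many switchers in every pair).} For a fixed $t$, each cycle $C_t(x,x')$ is, marginally, a uniformly random $4$-cycle of $K_{n,n}$ up to negligible dependencies. There are $\binom n2^2\approx n^4/4$ four-cycles in total, so the expected number of switchers among the $n/2$ cycles of the pair $t$ is at least roughly
\[
\frac{n}{2}\cdot\frac{\eta n^4}{n^4/4}=2\eta n .
\]
Changing $\pi$ by a transposition alters only $O(1)$ cycles of each pair. Hence a concentration inequality for random permutations (McDiarmid, or Azuma along the exposure martingale) gives that each pair has at least $k:=\eta n/2$ switchers with probability $1-e^{-\Omega(\eta^2 n)}$. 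A union bound over the $n/2$ values of $t$ then yields a single choice of $\pi$ that works for every pair. I expect this step to be the main technical point. The delicate parts are that the first-moment computation must correctly count the $4$-cycles realised as $C_t(x,x')$, and that the bounded-differences constant must be handled properly.

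\textbf{Step 2 (balancing inside a pair).} Fix a pair $(M_t,M_{t'})$ with $t'=t+n/2$, and let $T=S_\sigma(M_t)+S_\sigma(M_{t'})$. The quantity $T$ is invariant under swaps. Put $D=S_\sigma(M_t)-S_\sigma(M_{t'})$. A non-switcher cycle contributes $0$ to $D$ in either orientation, while flipping a switcher changes $D$ by a nonzero even amount of absolute value at most $8$. Orienting every switcher in favour of $M_t$ gives $D=D_{\max}\ge 2k=\eta n$, and the opposite orientation gives $-D_{\max}$. Flipping switchers one at a time moves $D$ between these extremes in steps of size at most $8$. Since
\[
S_\sigma(M_t)=\frac{T+D}{2},\qquad S_\sigma(M_{t'})=\frac{T-D}{2},
\]
it suffices to make $\min(|T+D|,|T-D|)=|T|-|D|$ or $|D|-|T|$ large. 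If $|T|\le \eta n/2$, we take $D=D_{\max}$, so that $|D|-|T|\ge \eta n/2$. Otherwise we walk $D$ to a value within $8$ of $0$, so that $|T|-|D|\ge \eta n/2-8$. In both cases each matching satisfies
\[
|S_\sigma|\ge \eta n/4-4 ,
\]
which gives $\disc\ge \eta/4-4/n$. This is comfortably above $\eta/8-3/n$; the slack is there to absorb losses in Step 1.

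\textbf{Odd $n$.} For odd $n$ there is no $n/2$ shift, and the loss $3/n$ in the statement suggests a small modification. I would handle this by splitting off one vertex from each side (or one special matching), applying the even construction to $K_{n-1,n-1}$, and then reinserting the removed vertices. The reinsertion can be done by a standard exchange that alters each matching in $O(1)$ edges, which costs at most $O(1)/n$ in discrepancy. Pairing the leftover matching with a neighbour via a small Latin-square adjustment keeps the error within the stated $3/n$.
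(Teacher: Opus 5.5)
Your even-$n$ construction is essentially the paper's: a $C_4$-factorization obtained by blowing up a cyclic $1$-factorization of $K_{n/2,n/2}$ along pairings of $X$ and $Y$, with concentration over random permutations. Your Step~2 is a correct variant of Lemma~\ref{lem:local-switch}, and splitting on $|T|$ even gives $\disc\ge\eta/4-O(1/n)$.

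Step~1 as written is wrong, however. $Y$ is labelled arbitrarily and only $\pi$ is random, so the $Y$-part of every cycle $C_t(x,x')$ is one of the $n/2$ fixed pairs $\{y_a,y_{a+n/2}\}$. The cycles are therefore not marginally uniform $4$-cycles. Suppose the columns of $M_\sigma$ indexed by $y_a$ and $y_{a+n/2}$ coincide for every $a$ (say, a random signing of $n/2$ columns, each duplicated). Then $s(G)=\Theta(n^4)$, yet no $C_t(x,x')$ is ever a switcher, because a $4$-cycle on two equal columns has $S_\sigma(\psi_1)=S_\sigma(\psi_2)$. The fix is to randomize the labelling of $Y$ as well, or to fix good pairings on both sides by a first-moment argument as the paper does. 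Then each cycle is a uniform $4$-cycle, the expected count is at least $2\eta n$, and your bounded-differences argument goes through.

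The genuine gap is the odd case, which is not a small modification. In your reinsertion scheme, $n-1$ matchings of $K_{n-1,n-1}$ must become $n$ matchings of $K_{n,n}$, so one matching is entirely new. It consists of $x_0y_0$ together with one edge $e_i$ removed from each $M_i$, and the $e_i$ must form a rainbow perfect matching, i.e.\ a transversal of the Latin square encoded by your factorization. There are two problems:
\begin{itemize}
\item[(1)] Such a transversal need not exist. Before switching, your square is isotopic to the Cayley table of $\mathbb{Z}_{n-1}$ with $n-1$ even, which has none.
\item[(2)] More importantly, nothing controls the discrepancy of this new matching. It is not a perturbation of a good matching, so an ``$O(1)/n$ error'' says nothing about it. Pairing it with a neighbour does not help either, since their union is a $2$-factor with no guaranteed switcher $C_4$-components.
\end{itemize}

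The paper relies on two ideas your sketch lacks.
\begin{itemize}
\item[(1)] The special matching is chosen \emph{in advance} to have large discrepancy. A maximal family of vertex-disjoint switchers has size at least $\eta n/2$. Completing both orientations of these switchers by a common matching of the remaining vertices gives two perfect matchings whose sign sums differ by at least $\eta n$, so one of them has $\disc\ge\eta/2$ (Lemma~\ref{lem:seed}).
\item[(2)] For odd $n$, $K_{n,n}-I$ cannot be split into $C_4$-factors. The paper takes a $2$-factorization of $K_n$ into $C_4$'s plus one $C_3$ or $C_5$, from the resolution of the Oberwolfach problem. It lifts this through the canonical double cover $B(K_n)\cong K_{n,n}-I$, obtaining $2$-factors with at least $n/3$ components $C_4$ and one $C_6$ or $C_{10}$. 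A random relabelling preserving $I$, combined with the permutation concentration lemma of Ai, He, Im, and Lee, makes at least $\eta n/2$ of these $C_4$'s switchers in every $2$-factor. Only then does the splitting argument of your Step~2 apply.
\end{itemize}
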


\begin{restatable}{theorem}{theoremc}\label{lem:3rdcase}
Let $0<\alpha\le 10^{-2}$ and
$G=(K_{n,n},\sigma)\in\mathcal{K}_{n,n}^{\pm}$ with $n$
sufficiently large, and $M=M_\sigma$. If
$\disc(G)\le\alpha/3$ and $s(G)\le \alpha^{2} n^{4}/64$, then $M$
is $4\sqrt{\alpha}$-close to a balanced one-sided signing;
equivalently, $M$ is $4\sqrt{\alpha}$-close to either
$\mathbf{1}\mathbf{z}^{T}$ or $\mathbf{z}\mathbf{1}^{T}$ for some
balanced $\pm 1$-vector $\mathbf{z}$.
\end{restatable}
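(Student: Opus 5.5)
The plan is to use the fact that a $4$-cycle on rows $i,k$ and columns $j,l$ is a non-switcher exactly when
\[
M_{ij}+M_{kl}=M_{il}+M_{kj}.
\]
So few switchers means $M$ is close to an additive matrix $a_i+b_j$. A $\pm1$ matrix of that form must depend on only one index. I would prove a quantitative version of this via a projection (spectral) identity, then a combinatorial rounding step, and finally use $\disc(G)\le\alpha/3$ to force balance.

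\textbf{Step 1 (spectral identity).} Let $P=I-\frac1n J$ and set
\[
D:=\sum_{i,k,j,l}\bigl(M_{ij}-M_{il}-M_{kj}+M_{kl}\bigr)^2 .
\]
Expanding the square and summing over all indices gives $D=4n^2\|PMP\|_F^2$. Each term of $D$ lies in $\{0,4,16\}$ and is nonzero only on a switcher. Each $4$-cycle corresponds to a bounded number ($8$) of ordered quadruples. Hence $D\le 128\,s(G)\le 2\alpha^2 n^4$, and so $\|PMP\|_F^2\le \alpha^2n^2/2$. Equivalently, writing $r_i,c_j,m$ for the row means, column means and total mean, we get
\[
\sum_{i,j}\bigl(M_{ij}-r_i-c_j+m\bigr)^2\le \alpha^2 n^2/2 .
\]
Thus $M$ is within Frobenius distance $O(\alpha n)$ of the rank-$\le 2$ additive matrix $\mathbf r\mathbf 1^T+\mathbf 1\mathbf c^T-m J$.

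\textbf{Step 2 (rounding to one-sided).} This is the main obstacle: turning "close to additive" into "close to $\mathbf z\mathbf 1^T$ or $\mathbf 1\mathbf z^T$" with loss only $O(\sqrt\alpha)$. I would argue combinatorially on column pairs. For columns $j,l$, partition the rows by the difference $M_{ij}-M_{il}\in\{-2,0,2\}$. Two rows in different classes give a switcher, so
\[
s(G)\gtrsim \sum_{j<l}\Bigl(n^2-\sum_{d}N_d(j,l)^2\Bigr)\big/2 .
\]
Consequently, for all but an $O(\sqrt\alpha)$-fraction of pairs $(j,l)$, a single class contains all but $O(\sqrt\alpha)n$ rows; this uses Markov's inequality with threshold $\sqrt\alpha$. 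Call such a pair \emph{equal} if the dominant class is $0$ and \emph{opposite} if it is $\pm2$. The sign of the dominant class is forced, because $d=2$ everywhere means column $j$ is $\approx+\mathbf 1$, which is a special case.

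A triangle argument then applies: for three good pairs among $j,l,m$, "opposite" is not transitive, and "equal" composes with "equal/opposite" consistently. Hence the columns split, up to $O(\sqrt\alpha)n$ exceptional columns, into at most two classes $C_1,C_2$. Within a class the columns are nearly identical; across the classes they are nearly negatives of each other.

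\begin{itemize}
\item If $C_2$ is small, all columns are close to a single column vector $\mathbf z$, so $M$ is close to $\mathbf z\mathbf 1^T$.
\item If both classes are large, the good cross pairs are "opposite". In an opposite pair with dominant class $d=\pm2$, column $j$ is nearly $\pm\mathbf 1$. Hence each column is nearly constant, so $M$ is close to $\mathbf 1\mathbf z^T$.
\end{itemize}

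An alternative route, if the combinatorics is messy, is to use Step 1 directly. Take $a_i=r_i-m/2$ and $b_j=c_j-m/2$. Since $|a_i+b_j|\approx1$ for most pairs, if both $\mathbf a$ and $\mathbf b$ had variance $\gg\sqrt\alpha$, then the sums $a_i+b_j$ would take at least three separated values on positive-density sets, contradicting the Frobenius bound.

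\textbf{Step 3 (balance).} Suppose $M$ differs from $\mathbf z\mathbf 1^T$ in at most $\beta n^2$ entries, with $\beta=O(\sqrt\alpha)$. Then
\[
\bigl|\textstyle\sum_{ij}M_{ij}-n\sum_i z_i\bigr|\le 2\beta n^2 .
\]
Combined with $\disc(G)\le\alpha/3$, this gives $|\sum_i z_i|\le(2\beta+\alpha/3)n$. Flipping at most $(\beta+\alpha/6)n$ entries of $\mathbf z$ makes it balanced, at a cost of at most $(\beta+\alpha/6)n^2$ further entries. Tracking the constants (using $\alpha\le10^{-2}$, so $\alpha\le\sqrt\alpha/10$) should keep the total distance below $4\sqrt\alpha\, n^2$. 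The constant-chasing in Step 2 is where I expect most of the difficulty to lie.
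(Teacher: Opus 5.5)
Your route is correct in outline and genuinely different from the paper's.

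\textbf{The paper's route.} The paper uses the identity $\operatorname{tr}((MM^T)^2)=n^4-8s_2(G)$, in which only Type-2 switchers appear, to get $\sigma_1\ge\sqrt{1-\alpha^2/8}\,n$. Rounding the top singular vectors then gives a rank-one sign matrix $\mathbf{x}\mathbf{y}^T$ that disagrees with $M$ in at most $2\alpha n^2$ entries. Only after that does it use Type-1 switchers, via $x^+x^-y^+y^-=s_1(\mathbf{x}\mathbf{y}^T)\le s_1(M)+2\alpha n^4$, to force one of $\mathbf{x},\mathbf{y}$ to have at least $(1-3\sqrt\alpha)n$ entries of one sign. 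That transfer step is where the square root is lost.

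\textbf{Your route.} Your Step 2 works directly with the exact identity $s(G)=\sum_{j<l}\tfrac12\bigl(n^2-\sum_d N_d(j,l)^2\bigr)$. It adds the observation that an ``opposite'' pair forces both columns to be nearly constant. This treats both switcher types at once, needs no linear algebra, and makes your Step 1 unnecessary. Step 1 is correct, though a $4$-cycle gives $4$, not $8$, ordered quadruples.

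\textbf{Constants.} The constant-chasing you worry about disappears if you replace the triangle argument by an anchor column, which also avoids accumulating error along chains.
Set $t(j,l):=n-\max_d N_d(j,l)$. Then $n\,t(j,l)\le n^2-\sum_d N_d(j,l)^2$, hence $\sum_{j\ne l}t(j,l)\le 4s(G)/n\le\alpha^2n^3/16$. So some column $j_0$ has $T:=\sum_l t(j_0,l)\le\alpha^2n^2/16$. There are two cases.
\begin{enumerate}
\item[(a)] Some opposite pair $(j_0,l)$ has $t(j_0,l)\le\theta n$. Then column $j_0$, and hence every column, is close to a constant, with total error at most $T+\theta n^2$.
\item[(b)] No such pair exists. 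Then fewer than $T/(\theta n)$ columns are opposite to $j_0$, and $M$ is within $T+T/\theta$ entries of $\mathbf{w}\mathbf{1}^T$, where $\mathbf{w}$ is column $j_0$.
\end{enumerate}
With $\theta=\alpha/4$ this gives $\beta\le\alpha/4+\alpha^2/16$. Your Step 3 then yields distance at most about $(2\beta+\alpha/6)n^2\le\alpha n^2$ for large $n$.

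\textbf{What each approach buys.} Your approach, completed this way, actually proves $O(\alpha)$-closeness, which is stronger than the paper's $4\sqrt\alpha$. The paper's route instead buys the spectral-stability statement it advertises: few Type-2 switchers alone already force $M$ to be near rank one.
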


We now deduce Theorem~\ref{thm-main} from these three auxiliary
results.

\begin{proof}[\textbf{Proof of Theorem~\ref{thm-main}}]
Fix \(\varepsilon>0\), and set
\(\alpha:=\min\{\varepsilon^2/16,10^{-2}\}\) and \(\eta:=\alpha^2/64\). Then
\(4\sqrt{\alpha}\le\varepsilon\). Choose \(n\) sufficiently large so that
Theorems~\ref{lem:disclarge}, \ref{thm:low-disc-many-sw}, and
\ref{lem:3rdcase} apply, and so that \(6n^{-1/4}\le \alpha/6\) and
\(3/n\le \eta/32\). Let
\[
    c(\varepsilon):=\min\left\{\frac{\alpha}{6},\frac{\eta}{16}\right\}= \frac{\eta}{16} = \min\left\{\frac{\varepsilon^4}{16^3\cdot 64},\frac{1}{640000}\right\}\geq \frac{\varepsilon^4}{640000}.
\]

Let \(G=(K_{n,n},\sigma)\in\mathcal{K}_{n,n}^{\pm}\). If
\(\disc(G)>\alpha/3\), then Theorem~\ref{lem:disclarge} gives a
\(1\)-factorization \(E(G)=M_1\cup\cdots\cup M_n\) such that, for every
\(i\in[n]\),
\[
    \disc(M_i)\ge \disc(G)-6n^{-1/4}
    >\frac{\alpha}{3}-\frac{\alpha}{6}
    =\frac{\alpha}{6}
    \ge c(\varepsilon).
\]
Thus the desired \(1\)-factorization exists.

If instead \(s(G)>\eta n^4\), then Theorem~\ref{thm:low-disc-many-sw} gives a
\(1\)-factorization \(E(G)=M_1\cup\cdots\cup M_n\) such that, for every
\(i\in[n]\),
\[
    \disc(M_i)\ge \frac{\eta}{8}-\frac{3}{n}
    \ge \frac{\eta}{8}-\frac{\eta}{16}
    =\frac{\eta}{16}
    \ge c(\varepsilon),
\]
so again the desired \(1\)-factorization exists.

It remains to consider the case that \(\disc(G)\le\alpha/3\) and
\(s(G)\le \eta n^4=\alpha^2n^4/64\). Since \(\alpha\le10^{-2}\),
Theorem~\ref{lem:3rdcase}, applied with parameter \(\alpha\), implies that
\(G\) is \(4\sqrt{\alpha}\)-close to a balanced one-sided signing. Since
\(4\sqrt{\alpha}\le\varepsilon\), \(G\) is \(\varepsilon\)-close to a balanced
one-sided signing. This completes the proof.
\end{proof}

The remainder of the paper is organized as follows. In
Section~\ref{sec:disclarge} we prove Theorem~\ref{lem:disclarge}; in
Section~\ref{sec:nofswitcherslarge} we prove
Theorem~\ref{thm:low-disc-many-sw}; and in Section~\ref{sec:bothsmall}
we prove Theorem~\ref{lem:3rdcase}.

\section{High discrepancy (proof of Theorem \ref{lem:disclarge})}\label{sec:disclarge}

\begin{lemma}[Talagrand's inequality on random permutations {\cite{mcdiarmid2002concentration}}]\label{Talagrand}
Let $h : S_n \to \mathbb{R}_{\ge 0}$ be a function, and let $\pi \in S_n$ be a random permutation. Suppose that there are constants $c, r > 0$ such that the following two conditions hold.
\begin{itemize}
    \item For any permutations $\pi$ and $\pi'$ differing at $d$ coordinates, we have
    $
    |h(\pi) - h(\pi')| \le cd.
    $
    
    \item If $h(\pi) = s$, then we can find a set $S$ with at most $rs$ coordinates such that for any permutation $\pi' \in S_n$ agreeing with $\pi$ on $S$, we have
    $
    h(\pi') \ge s.
    $
\end{itemize}
Let $M$ be the median of $h(\pi)$. Then for every $t > 0$, we have
$\mathbb{P}(|h(\pi) - M| \ge t) \le 6 e^{-\frac{t^2}{16rc^2(M+t)}}.$
\end{lemma}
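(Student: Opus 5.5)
The plan is to derive the inequality from Talagrand's convex-distance inequality for uniformly random permutations (due to McDiarmid). Write $d_T(\pi,A)$ for the convex distance from $\pi$ to a set $A\subseteq S_n$:
\[
d_T(\pi,A)=\sup_{\alpha}\ \min_{\pi'\in A}\sum_{i:\pi(i)\neq\pi'(i)}\alpha_i ,
\]
where the supremum runs over unit vectors $\alpha\in\mathbb{R}_{\ge0}^n$. The permutation version of Talagrand's inequality says that
\[
\mathbb{P}(A)\,\mathbb{P}\bigl(d_T(\pi,A)\ge u\bigr)\le e^{-u^2/16}
\]
for every $A$ and every $u>0$. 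I will take this as the black box. Everything else is the standard ``Lipschitz plus certifiable'' reduction, adapted to the permutation setting.

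\emph{Step 1 (key estimate).} Fix $b\ge0$ and $t>0$, and set $A=\{h\le b\}$. Suppose $h(\pi)\ge b+t$. Since certificates are only guaranteed at the exact value $s=h(\pi)$, I will work with $s$ directly, using $s\ge b+t$. Let $S$ be the certificate for $\pi$, with $|S|\le rs$. Take the weight vector $\alpha$ that is uniform, equal to $|S|^{-1/2}$, on $S$ and zero elsewhere.

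Now take any $\pi'\in A$, and let $k=|\{i\in S:\pi'(i)\neq\pi(i)\}|$. The aim is to show that $k$ must be large. To do this, build a permutation $\pi''$ that agrees with $\pi$ on $S$ and differs from $\pi'$ in at most $2k$ coordinates. This is done by correcting each bad coordinate of $S$ with a single transposition of values; each transposition changes at most two coordinates, and it does not disturb coordinates of $S$ that are already correct.

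By the certificate property, $h(\pi'')\ge s\ge b+t$. By the Lipschitz condition, $h(\pi'')\le h(\pi')+2ck\le b+2ck$. Hence $k\ge t/(2c)$. It follows that
\[
d_T(\pi,A)\ge \frac{k}{\sqrt{|S|}}\ge \frac{t}{2c\sqrt{r s}} .
\]
Monotonicity in $s$ is the one thing to check here. Since $s\mapsto (s-b)^2/s$ is increasing for $s\ge b$, it is harmless to take $s=h(\pi)$ itself rather than $b+t$. This yields
\[
\mathbb{P}(h\le b)\,\mathbb{P}(h\ge b+t)\le \exp\!\Bigl(-\frac{t^2}{64\,r c^2 (b+t)}\Bigr).
\]

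\emph{Step 2 (the two tails).} For the upper tail, take $b=M$. Since $\mathbb{P}(h\le M)\ge 1/2$, this gives $\mathbb{P}(h\ge M+t)\le 2e^{-t^2/(\cdots(M+t))}$. For the lower tail, take $b=M-t$ (only when $t\le M$; otherwise the event is empty because $h\ge 0$). Since $\mathbb{P}(h\ge M)\ge1/2$, this gives $\mathbb{P}(h\le M-t)\le 2e^{-t^2/(\cdots M)}$, which is at most the same bound with $M+t$ in the denominator. Summing the two tails gives a bound of the stated form $C e^{-t^2/(C' r c^2 (M+t))}$.

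\emph{Main obstacle: constants.} The step I expect to need the most care is matching the paper's constants $6$ and $16$ rather than the cruder $4$ and $64$ that the argument above produces. My first attempt would be to follow McDiarmid's sharper bookkeeping. In Step 1 I would pass to the transposition-based Hamming distance directly, so that the factor $2$ from transpositions is absorbed into the definition of $d_T$. In Step 2 I would handle the median and the strict versus non-strict inequalities more carefully. For the application in this paper only the shape $e^{-\Omega(t^2/(rc^2(M+t)))}$ matters, so if this sharpening fails, a cruder constant would still suffice downstream.
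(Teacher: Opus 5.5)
\textbf{Gap: the argument proves a weaker exponent than the statement.} Your reduction (McDiarmid's convex-distance inequality $\mathbb{P}(A)\,\mathbb{P}(d_T(\pi,A)\ge u)\le e^{-u^2/16}$ plus the certificate/Lipschitz argument) is the standard route behind the cited result, and Steps 1--2 are correct as far as they go. The transposition repair is valid: the partner position $j=\pi'^{-1}(\pi(i))$ has $\pi'(j)=\pi(i)\ne\pi(j)$, so no correct position of $S$ is disturbed. The monotonicity of $(s-b)^2/s$ is fine because $b\ge 0$ in both applications. What this yields is $\mathbb{P}(|h-M|\ge t)\le 4e^{-t^2/(64rc^2(M+t))}$, which is not the statement. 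The prefactor is harmless, since $4\le 6$. The exponent is the problem: $4e^{-x/64}>6e^{-x/16}$ as soon as $x>9$, so your bound does not imply the lemma.

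\textbf{The proposed sharpening does not close it.} Care with medians and with strict versus non-strict inequalities cannot change the exponent. ``Absorbing the factor $2$ into $d_T$'' is also not available. Your black box $e^{-u^2/16}$ is for the coordinatewise Hamming convex distance, and a transposition-based distance would need its own concentration inequality.

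\textbf{The factor $2$ is forced in this route under the hypothesis as stated.} The weight vector must be chosen from $\pi$ alone, so it only sees the $k$ mismatched positions inside $S$. Suppose $\pi'$ is obtained from $\pi$ by $k$ disjoint transpositions, each swapping a position of $S$ with a position outside $S$. Then every $\pi''$ agreeing with $\pi$ on $S$ differs from $\pi'$ in at least $2k$ coordinates. The per-coordinate Lipschitz condition then only gives $|h(\pi'')-h(\pi')|\le 2ck$, hence $k\ge t/(2c)$.

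If instead the hypothesis were that a single transposition changes $h$ by at most $c$, your Step~1 would give $k\ge t/c$ and exactly the exponent $t^2/(16rc^2(M+t))$. Under the per-coordinate hypothesis of the statement, a transposition can change $h$ by $2c$, and that is precisely your $64$.

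\textbf{What to do.} Proving the lemma as written would need a genuinely stronger permutation inequality, not better bookkeeping. State what you actually prove, namely the version with $64$. As you note, that version suffices for Lemma~\ref{lem:cyclic-conc}. There $c=r=1$ and $u=2m^{3/4}$, so the tail is still $e^{-\Omega(\sqrt m)}$. The mean--median gap is still $O(\sqrt m)$, and the union bound over the $m$ shifts still goes through.
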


We will need the following lemma which can be derived from the above Talagrand's inequality. 

\begin{lemma}\label{lem:cyclic-conc}
Let \(A=(a_{ij})_{i,j\in \mathbb Z_m}\) be a \(0\)-\(1\) matrix, and let
\(\pi\in S_m\) be chosen uniformly at random. For \(t\in\mathbb Z_m\), set
\(h_t(\pi):=\sum_{i\in\mathbb Z_m} a_{i,\pi(i+t)} .\)
Then, for \(m\) sufficiently large, with positive probability,
\[
    \bigl|h_t(\pi)-\mathbb E h_t\bigr|\le 3m^{3/4}
    \qquad\text{for every }t\in\mathbb Z_m.
\]
Moreover,
\(\mathbb E h_t=\frac{1}{m}\sum_{i,j\in\mathbb Z_m}a_{ij}\)
for every \(t\).
\end{lemma}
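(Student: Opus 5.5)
We prove Lemma~\ref{lem:cyclic-conc} by applying Lemma~\ref{Talagrand} to each $h_t$ separately and then taking a union bound over the $m$ values of $t$.

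\textbf{The expectation.} For fixed $t$ and $i$, the value $\pi(i+t)$ is uniform on $\mathbb Z_m$. Hence $\mathbb E\, a_{i,\pi(i+t)}=\frac1m\sum_j a_{ij}$, and summing over $i$ gives $\mathbb E h_t=\frac1m\sum_{i,j}a_{ij}$. Call this common value $\mu\le m$.

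\textbf{The hypotheses of Lemma~\ref{Talagrand}.} Fix $t$; note $h_t\ge 0$.
\begin{itemize}
    \item \emph{Lipschitz condition.} If $\pi,\pi'$ differ in $d$ coordinates, then only the $d$ indices $i$ with $i+t$ among those coordinates change their term. Each term lies in $\{0,1\}$, so $|h_t(\pi)-h_t(\pi')|\le d$, and we may take $c=1$.
    \item \emph{Certificate condition.} If $h_t(\pi)=s$, let $S=\{i+t: a_{i,\pi(i+t)}=1\}$, which has exactly $s$ elements. Any $\pi'$ agreeing with $\pi$ on $S$ keeps these $s$ terms equal to $1$, so $h_t(\pi')\ge s$. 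Thus $r=1$ works.
\end{itemize}

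\textbf{Concentration about the median.} Let $M_t$ be the median of $h_t$. Lemma~\ref{Talagrand} with $c=r=1$ gives
\[
\mathbb P(|h_t-M_t|\ge \lambda)\le 6e^{-\lambda^2/(16(M_t+\lambda))}.
\]
Since $M_t\le m$, choosing $\lambda=2m^{3/4}$ makes the exponent at least $4m^{3/2}/(16\cdot 3m)=m^{1/2}/12$ for large $m$. So the tail probability is at most $6e^{-\sqrt m/12}$.

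\textbf{Passing from median to mean.} This is the only mildly delicate step. We bound
\[
|\mathbb E h_t-M_t|\le \mathbb E|h_t-M_t|=\int_0^\infty \mathbb P(|h_t-M_t|\ge \lambda)\,d\lambda .
\]
Split the integral at $\lambda=m^{3/4}$. The part below contributes at most $m^{3/4}$. Above it, bound the integrand using the same estimate with $M_t\le m$:
\begin{itemize}
    \item for $\lambda\le m$, the exponent is at least $\lambda^2/(32m)$, which gives a Gaussian tail;
    \item for $\lambda>m$ the probability is $0$, since $|h_t-M_t|\le m$.
\end{itemize}
Therefore the upper part is $O(\sqrt m)$, and $|\mathbb E h_t-M_t|\le m^{3/4}+O(\sqrt m)\le \tfrac{1}{2}m^{3/4}\cdot 2$ for large $m$; in any case it is at most $m^{3/4}+o(m^{3/4})$.

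\textbf{Conclusion.} With probability at least $1-6e^{-\sqrt m/12}$,
\[
|h_t-\mathbb E h_t|\le 2m^{3/4}+m^{3/4}+o(m^{3/4}).
\]
To land exactly within $3m^{3/4}$, rerun the median step with $\lambda=1.9m^{3/4}$; this gives the same type of tail bound and leaves room to absorb the $o(m^{3/4})$ error. A union bound over the $m$ values of $t$ gives failure probability at most $6me^{-c\sqrt m}<1$ for large $m$. Hence, with positive probability, $|h_t-\mathbb E h_t|\le 3m^{3/4}$ for every $t\in\mathbb Z_m$ simultaneously.
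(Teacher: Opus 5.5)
Your proof is correct and follows the paper's argument essentially step for step. Both apply Talagrand's inequality with $c=r=1$ using the same certificate $S=\{i+t: a_{i,\pi(i+t)}=1\}$, compare median and mean by integrating the tail, and finish with a union bound over $t$.

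The one slip is in your median-to-mean step. There you claim $m^{3/4}+O(\sqrt m)\le m^{3/4}$, which is false as written. Your fallback with $\lambda=1.9m^{3/4}$ correctly repairs it. The fallback is unnecessary, though: the bound $6e^{-\lambda^2/(32m)}$ holds on all of $[0,m]$, so the whole integral is already $O(\sqrt m)$. The paper uses exactly this, together with $\lambda=2m^{3/4}$.
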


\begin{proof}
For each fixed \(t\in \mathbb Z_m\), let \(m_t\) be a median of \(h_t(\pi)\), where
\(\pi\) is chosen uniformly at random from \(S_m\). We first prove concentration
around \(m_t\).

We verify the hypotheses of Lemma~\ref{Talagrand} with \(c=r=1\).
Suppose that two permutations \(\pi,\pi'\in S_m\) differ on exactly \(d\)
coordinates. Then the terms $a_{i,\pi(i+t)}$
can change only for those \(i\) such that \(\pi(i+t)\ne \pi'(i+t)\). Since the
map \(i\mapsto i+t\) is a bijection of \(\mathbb Z_m\), there are at most \(d\)
such indices \(i\). Hence $|h_t(\pi)-h_t(\pi')|\le d$.

Next suppose that \(h_t(\pi)=s\). Define
\[
    S:=\{\,i+t\in \mathbb Z_m : a_{i,\pi(i+t)}=1\,\}.
\]
Then \(|S|=s\). If another permutation \(\widetilde\pi\in S_m\) agrees with
\(\pi\) on every coordinate in \(S\), then for every \(i\) with
\(i+t\in S\), we still have $a_{i,\widetilde\pi(i+t)}=a_{i,\pi(i+t)}=1$.
Thus \(h_t(\widetilde\pi)\ge s\). This gives the required certificate condition
with \(r=1\).

By Lemma~\ref{Talagrand}, for every \(u>0\), since \(0\le m_t\le m\), we have
\[
    \mathbb P\bigl(|h_t-m_t|\ge u\bigr)
    \le
    6e^{-\frac{u^2}{16(m_t+u)}}\le
    6e^{-\frac{u^2}{16(m+u)}}.
\]
Integrating this tail bound gives
\[
    |\mathbb Eh_t-m_t|\leq \mathbb E|h_t-m_t|
    \le
    \int_0^\infty
    6e^{-\frac{u^2}{16(m+u)}}\,du\leq  \int_0^m
    6e^{-\frac{u^2}{32m}}\,du + \int_m^\infty
    6e^{-\frac{u}{32}}\,du
    \le C_0\sqrt m,
\]
for some absolute constant \(C_0>0\). 

Set \(u=2m^{3/4}\). For all sufficiently large \(m\), we have
\(C_0\sqrt m\le m^{3/4}\). Hence
\[
\begin{aligned}
    \mathbb P\bigl(|h_t-\mathbb Eh_t|\ge 3m^{3/4}\bigr)
    \le
    \mathbb P\bigl(|h_t-m_t|\ge 2m^{3/4}\bigr)  
    \le
    6e^{-\frac{m^{3/2}}{4(m+2m^{3/4})}} 
    \le
    6e^{-\sqrt m/8}.
\end{aligned}
\]
Taking a union bound over all \(t\in\mathbb Z_m\), we obtain
\[
    \mathbb P\left(
    \exists t\in\mathbb Z_m:
    |h_t-\mathbb Eh_t|\ge 3m^{3/4}
    \right)
    \le
    6m e^{-\sqrt m/8}
    =
    o(1).
\]
Thus, for all sufficiently large \(m\), this probability is less than \(1\).
Consequently, there exists a permutation \(\pi\in S_m\) such that for every
\(t\in\mathbb Z_m\),
\[
    |h_t(\pi)-\mathbb Eh_t|
    \le
    3m^{3/4}.
\]

We now compute the expectation of $h_t$. For each fixed \(i\in\mathbb Z_m\), the random
variable \(\pi(i+t)\) is uniformly distributed on \(\mathbb Z_m\). Therefore
\[
    \mathbb E h_t
    =
    \sum_{i\in\mathbb Z_m}
    \mathbb E a_{i,\pi(i+t)}
    =
    \sum_{i\in\mathbb Z_m}
    \frac{1}{m}\sum_{j\in\mathbb Z_m}a_{ij}
    =
    \frac{1}{m}\sum_{i,j\in\mathbb Z_m}a_{ij}.
\]
This completes the proof.
\end{proof}

We restate Theorem \ref{lem:disclarge} for convenience: 

\theorema*

\begin{proof}[Proof of Theorem \ref{lem:disclarge}]
    Let the two vertex classes of $K_{n,n}$ be
    \[
X=\{x_0,x_1,\dots,x_{n-1}\},
\qquad
Y=\{y_0,y_1,\dots,y_{n-1}\}.
\]
Let $p:=|\sigma^{-1}(+1)|/n^2$ be the proportion of +1 entries in the signing $\sigma$; 
replacing $\sigma$ by $-\sigma$ if necessary, we may assume $p \ge 1/2$, so that $c=2p-1\geq 0$.
    For a permutation $\pi\in S_n$ and $t\in \mathbb{Z}_n$, define
$$M_t^{\pi}:=\{x_i y_{\pi(i+t)}: i\in \mathbb{Z}_n\}.$$
Then the family
$\mathcal{M}_{\pi}:=\{M_t^{\pi}: t\in \mathbb{Z}_n\}$
is a $1$-factorization of $K_{n,n}$.

Let \(A=(a_{ij})_{i,j\in\mathbb Z_n}\) be a \(0\)-\(1\) matrix such that
\(
    a_{ij}:=\mathbf 1[\sigma(x_i y_j)=+1]
\)
and $h_t(\pi):=\sum_{i\in\mathbb Z_n} a_{i,\pi(i+t)}$
for each $t\in\mathbb{Z}_n$. Then, \(
    \frac{1}{n}\sum_{i,j\in\mathbb Z_n} a_{ij}=\frac{\sigma^{-1}(+1)}{n}=pn
\). In addition, as $h_t(\pi)$ is the number of edges in $M_t^{\pi}$ with sign $+1$, we have

\[\disc(M_t^{\pi})=\frac{|h_t(\pi)-(n-h_t(\pi))|}{n}=\frac{|2h_t(\pi)-n|}{n}.\]

By Lemma~\ref{lem:cyclic-conc}, there exists a permutation \(\pi\in S_n\)
such that
\[
    |h_t(\pi)-pn|\le 3n^{3/4}
    \qquad\text{for every }t\in\mathbb Z_n.
\]
Therefore, for every $t\in\mathbb{Z}_n$, we have that
\[
|\disc(M_t^{\pi})-c|
\leq\frac{|2h_t(\pi)-n-(2p-1)n|}{n}
=\frac{2|h_t(\pi)-pn|}{n}
\leq 6n^{-1/4},
\]
completing the proof.
\end{proof}

\section{Many switchers (proof of Theorem \ref{thm:low-disc-many-sw})}\label{sec:nofswitcherslarge}

In Sections \ref{subsec:ms1} and \ref{subsec:ms2} we state and prove some lemmas needed for the proof of Theorem \ref{thm:low-disc-many-sw} and in Section \ref{subsec:ms4} we give its proof.

\subsection{Decomposing $K_{n,n}-I$ into 2-factors for odd $n$}\label{subsec:ms1}

The following lemma can be deduced from the resolution of the Oberwolfach problem \cite{glock2021resolution}.

\begin{lemma}[Theorem 1.3 of \cite{glock2021resolution}]\label{lem:Kndcom}
    For sufficiently large odd $n$,  $K_n$ has an edge-decomposition
\[
E(K_n)=E(F_1)\cup E(F_2)\cup\cdots\cup E(F_{(n-1)/2}),
\]
where each $F_i\cong\frac{n-3}{4}C_4\cup C_3$ when $n\equiv 3\pmod 4$ and $F_i\cong\frac{n-5}{4}C_4\cup C_{5}$ when $n\equiv 1\pmod 4$. 
\end{lemma}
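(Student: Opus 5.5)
The plan is to obtain the lemma as a direct specialization of the resolution of the Oberwolfach problem, after checking that the prescribed graphs $F_i$ satisfy its hypotheses. I would use the form of Theorem~1.3 of \cite{glock2021resolution} that says: there is $n_0$ such that for every odd $n\ge n_0$ and \emph{every} $2$-regular graph $F$ on $n$ vertices, $K_n$ has an edge-decomposition into $(n-1)/2$ copies of $F$. The only work is to pick the right $F$ and verify it is admissible.

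\textbf{Choice of $F$.} Split by $n \bmod 4$.
\begin{itemize}
\item If $n\equiv 3\pmod 4$, let $F:=\frac{n-3}{4}C_4\cup C_3$. Then $(n-3)/4$ is a nonnegative integer, and $F$ has $4\cdot\frac{n-3}{4}+3=n$ vertices.
\item If $n\equiv 1\pmod 4$, let $F:=\frac{n-5}{4}C_4\cup C_5$. This has $4\cdot\frac{n-5}{4}+5=n$ vertices, and $(n-5)/4\ge 0$ once $n\ge 5$.
\end{itemize}
In either case $F$ is a vertex-disjoint union of cycles of length at least $3$ covering all $n$ vertices. Hence $F$ is a $2$-regular spanning graph on $n$ vertices, which is exactly the admissibility condition in the cited theorem.

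\textbf{Counting check.} The number of factors must be $(n-1)/2$. Since $n$ is odd, $K_n$ is $(n-1)$-regular with $n-1$ even. Also $e(K_n)=n(n-1)/2$ and $e(F)=n$, so a decomposition into copies of $F$ uses exactly $(n-1)/2$ copies. These copies are the $F_1,\dots,F_{(n-1)/2}$ in the statement.

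\textbf{Conclusion and main point of care.} Applying the theorem with $n\ge\max\{n_0,5\}$ gives the lemma. The only step needing care is confirming that the cited result covers all $2$-factors of a given large odd order, with no restriction on cycle lengths. In particular, the mixture of many $4$-cycles with one odd cycle must be allowed, and the result must not be limited to uniform cycle lengths. The theorem as stated in \cite{glock2021resolution} covers arbitrary $2$-regular $F$ for all sufficiently large $n$, so no further argument is needed.
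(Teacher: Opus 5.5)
Your proposal is correct and takes the same route as the paper, which gives no argument beyond citing the resolution of the Oberwolfach problem \cite{glock2021resolution}. Your checks are exactly the routine verifications the paper leaves implicit: each prescribed $F$ is a $2$-regular graph on exactly $n$ vertices (with $n\ge 5$ in the case $n\equiv 1 \pmod 4$), and a decomposition into copies of $F$ uses $e(K_n)/e(F)=(n-1)/2$ factors.
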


We will use Lemma~\ref{lem:Kndcom} to obtain a corresponding decomposition
of the crown graph $K_{n,n}-I$, where $I$ is a perfect matching of $K_{n,n}$.
We first recall the following standard construction.

The \emph{canonical double cover} of $G$, denoted by $B(G)$, is the bipartite graph with vertex set
\[
V(B(G))=V\times\{0,1\},
\]
and edge set
\[
E(B(G))
=
\bigl\{
\{(u,0),(v,1)\},\{(u,1),(v,0)\}
:\{u,v\}\in E
\bigr\}.
\]
Equivalently, each edge $\{u,v\}$ of $G$ is replaced by the two edges
$(u,0)(v,1)$ and $(u,1)(v,0)$. 
After labelling the two parts of $K_{n,n}$ by two copies of $V(K_n)$
and taking $I$ to be the diagonal matching, we have
$B(K_n)\cong K_{n,n}-I$.
We will also use the following simple proposition about cycles.

\begin{proposition}\label{prop:cycles}
Let $m \geq 3$.
\begin{enumerate}
    \item[\rm (i)] If $m$ is odd, then $B(C_m) \cong C_{2m}$.
    \item[\rm (ii)] If $m$ is even, then $B(C_m) \cong C_m \cup C_m$.
\end{enumerate}
\end{proposition}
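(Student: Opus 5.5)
The plan is to label the cycle $C_m$ explicitly and trace the edges of $B(C_m)$ as a walk. Write $V(C_m)=\mathbb{Z}_m$ with edges $\{i,i+1\}$ for $i\in\mathbb{Z}_m$. Then $B(C_m)$ has $2m$ vertices $(i,\epsilon)$ with $\epsilon\in\{0,1\}$, and $2m$ edges $(i,\epsilon)(i+1,1-\epsilon)$. Each vertex $(i,\epsilon)$ has exactly two neighbours, namely $(i+1,1-\epsilon)$ and $(i-1,1-\epsilon)$; these are distinct since $m\ge 3$. Hence $B(C_m)$ is $2$-regular, so it is a disjoint union of cycles, and it only remains to determine the cycle lengths.

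To do this, start at $(0,0)$ and repeatedly apply the map $\phi(i,\epsilon)=(i+1,1-\epsilon)$. Consecutive vertices of this walk are adjacent, and after $k$ steps we are at $(k,\, k \bmod 2)$. The walk first returns to $(0,0)$ at the least $k>0$ with $k\equiv 0 \pmod m$ and $k$ even, that is, at $k=\operatorname{lcm}(m,2)$.

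For (i), $m$ is odd, so $\operatorname{lcm}(m,2)=2m$ and the orbit of $\phi$ visits all $2m$ vertices before closing. Every vertex therefore lies in one cycle of length $2m$, and $B(C_m)\cong C_{2m}$.

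For (ii), $m$ is even, so the orbit of $(0,0)$ closes after $m$ steps. It consists of the vertices $(i,\epsilon)$ with $i\equiv\epsilon \pmod 2$. The orbit of $(0,1)$ likewise gives a second cycle of length $m$ on the vertices with $i\not\equiv \epsilon \pmod 2$. Together the two orbits cover all $2m$ edges, so $B(C_m)\cong C_m\cup C_m$.

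The argument is routine. The only step needing care is checking, via $m\ge3$, that $(i+1,1-\epsilon)\ne(i-1,1-\epsilon)$, so that the graph is simple and $2$-regular; this guarantees the orbits are genuine cycles.
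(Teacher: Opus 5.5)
Your proof is correct and is essentially the paper's argument. The paper also observes that $B(C_m)$ is $2$-regular and then writes out the closed walk $(v_1,0)(v_2,1)(v_3,0)\cdots$ explicitly; that walk is exactly your orbit of $\phi(i,\epsilon)=(i+1,1-\epsilon)$. Your $\operatorname{lcm}(m,2)$ computation makes precise when the walk closes, and your check that $m\ge 3$ keeps the two neighbours distinct is a detail the paper leaves implicit.
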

\begin{proof}
Let $C_m=v_1v_2\cdots v_mv_1$, where indices are taken modulo $m$. By definition of the canonical double cover, $V(B(C_m))=V(C_m)\times\{0,1\}$, and each edge $v_iv_{i+1}$ of $C_m$ lifts to the two edges
\[
\{(v_i,0),(v_{i+1},1)\}\qquad\text{and}\qquad \{(v_i,1),(v_{i+1},0)\}.
\]
In particular, $B(C_m)$ is $2$-regular, hence a disjoint union of cycles.

\smallskip\noindent\textbf{(i)} Suppose $m$ is odd. A direct check shows that there is a cycle of length $2m$ in $B(C_m)$ as follows:
\[
(v_1,0)\,(v_2,1)\,(v_3,0)\cdots(v_m,0)\,(v_1,1)\,(v_2,0)\,(v_3,1)\cdots(v_m,1)\,(v_1,0)
\]
Since $\lvert V(B(C_m))\rvert=2m$, this cycle exhausts all vertices, and therefore $B(C_m)\cong C_{2m}$.

\smallskip\noindent\textbf{(ii)} Suppose $m$ is even. Then there are  two vertex-disjoint cycles of length $m$ as follows:
\[
(v_1,0)\,(v_2,1)\,(v_3,0)\,(v_4,1)\cdots(v_{m-1},0)\,(v_m,1)\,(v_1,0)
\]
and
\[
(v_1,1)\,(v_2,0)\,(v_3,1)\,(v_4,0)\cdots(v_{m-1},1)\,(v_m,0)\,(v_1,1)
\]
These two cycles together cover all $2m$ vertices. Hence $B(C_m)\cong C_m\cup C_m$.
\end{proof}

The following lemma shows that after removing a perfect matching, $K_{n,n}$ can be decomposed similarly to $K_n$.
\begin{lemma}\label{lem:crown}
Let $I$ be a perfect matching of $K_{n,n}$. For every sufficiently large odd $n$, $K_{n,n}-I$ admits an edge-decomposition
\[
E(K_{n,n}-I)=E(F_1)\cup E(F_2)\cup\cdots\cup E(F_{(n-1)/2}),
\]
where
\[
F_i \;\cong\;
\begin{cases}
\tfrac{n-3}{2}\,C_4 \cup C_6, & n\equiv 3\pmod 4,\\[2pt]
\tfrac{n-5}{2}\,C_4 \cup C_{10}, & n\equiv 1\pmod 4.
\end{cases}
\]
\end{lemma}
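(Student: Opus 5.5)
The plan is to transfer the decomposition of Lemma~\ref{lem:Kndcom} to the crown graph via the canonical double cover, using Proposition~\ref{prop:cycles} to identify the lifted cycles. Since all perfect matchings of $K_{n,n}$ are equivalent under relabelling one side, we may assume that $I$ is the diagonal matching $\{(v,0)(v,1): v\in V(K_n)\}$. Here the two parts of $K_{n,n}$ are $V(K_n)\times\{0\}$ and $V(K_n)\times\{1\}$, so that $K_{n,n}-I = B(K_n)$ as noted above.

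First, take the decomposition $E(K_n)=E(F'_1)\cup\cdots\cup E(F'_{(n-1)/2})$ given by Lemma~\ref{lem:Kndcom}, where each $F'_i$ is a spanning $2$-factor of $K_n$. For each $i$, define $F_i:=B(F'_i)$, viewed as the spanning subgraph of $B(K_n)$ whose edges are the two lifts of each edge of $F'_i$.

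Next, check that the $F_i$ partition $E(K_{n,n}-I)$. Every edge of $B(K_n)$ has the form $(u,0)(v,1)$ with $u\neq v$, and it is a lift of exactly one edge $\{u,v\}$ of $K_n$. That edge lies in exactly one $F'_i$, so the edge of $B(K_n)$ lies in exactly one $F_i$. Moreover, the canonical double cover of a vertex-disjoint union of cycles is the vertex-disjoint union of the double covers of those cycles, because the lifts of distinct components live on disjoint vertex sets $V(C)\times\{0,1\}$.

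Finally, apply Proposition~\ref{prop:cycles} componentwise. Each $C_4$ lifts to $C_4\cup C_4$, $C_3$ lifts to $C_6$, and $C_5$ lifts to $C_{10}$. Thus for $n\equiv 3\pmod 4$ we get $F_i\cong 2\cdot\frac{n-3}{4}C_4\cup C_6=\frac{n-3}{2}C_4\cup C_6$, and for $n\equiv1\pmod 4$ we get $F_i\cong\frac{n-5}{2}C_4\cup C_{10}$, as claimed.

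There is no real obstacle here. The only points needing care are the reduction to the diagonal $I$ and the fact that $B$ commutes with disjoint unions and with edge-partitions; both follow directly from the definition of $B$.
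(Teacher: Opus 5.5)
Your proposal is correct and follows essentially the same route as the paper: lift the Oberwolfach-type $2$-factorization of $K_n$ from Lemma~\ref{lem:Kndcom} through the canonical double cover $B(K_n)\cong K_{n,n}-I$ and identify the lifted components via Proposition~\ref{prop:cycles}. You also spell out details the paper leaves implicit: the reduction to the diagonal matching, that the lifts partition the edges, and that $B$ commutes with disjoint unions of cycles.
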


\begin{proof}
By Lemma~\ref{lem:Kndcom}, for $n$ sufficiently large $K_n$ admits a $2$-factorization
\[
E(K_n)=E(F'_1)\cup E(F'_2)\cup\cdots\cup E(F'_{(n-1)/2}),
\]
where
\[
F_i' \;\cong\;
\begin{cases}
\tfrac{n-3}{4}\,C_4 \cup C_3, & n\equiv 3\pmod 4,\\[2pt]
\tfrac{n-5}{4}\,C_4 \cup C_5, & n\equiv 1\pmod 4.
\end{cases}
\]
As $B(K_n)\cong K_{n,n}-I$, under this isomorphism, each $F_i'$ lifts to a spanning $2$-regular subgraph $F_i$ of $K_{n,n}-I$, and the lifts partition $E(K_{n,n}-I)$.

By Proposition~\ref{prop:cycles}, the canonical double cover sends $C_4\mapsto 2C_4$, $C_3\mapsto C_6$, and $C_5\mapsto C_{10}$. Hence, each$\tfrac{n-3}{4}\,C_4\cup C_3$ lifts to $\tfrac{n-3}{2}\,C_4\cup C_6$, and similarly for the other case.
\end{proof}

\subsection{Finding high discrepancy matchings from switchers}\label{subsec:ms2}

We now turn local switchers into global discrepancy by splitting a
switcher-rich $2$-factor into two high-discrepancy perfect matchings.

\begin{lemma}\label{lem:seed}
   Let $G=(K_{n,n},\sigma)\in \mathcal{K}_{n,n}^{\pm}$. If $s(G)\ge \eta n^4$, then there exists a perfect matching $I\subseteq G$ such that $\disc(I)\ge \eta/2$. 
\end{lemma}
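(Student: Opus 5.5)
The plan is an averaging argument. First find a single perfect matching $P$ that contains one half of many switchers. Then find many vertex-disjoint such switchers inside $P$ and apply them all in the same direction.

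\textbf{Step 1: a matching meeting many switchers.} Recall that a switcher $C$ with $1$-factorization $\{\psi_1,\psi_2\}$ satisfies $S_\sigma(\psi_1)\ne S_\sigma(\psi_2)$. Since these sums lie in $\{-2,0,2\}$, they differ by $2$ or $4$. Choose a perfect matching $P$ of $K_{n,n}$ uniformly at random. A fixed pair of disjoint edges lies in $P$ with probability $1/(n(n-1))$. Each switcher has two halves, so the expected number of switchers having a half contained in $P$ is
\[
\frac{2s(G)}{n(n-1)}\ \ge\ 2\eta n^2 .
\]
Fix a $P$ achieving at least this count. Distinct switchers with a half in $P$ correspond to distinct pairs of edges of $P$, because the pair $\psi_1$ determines the $4$-cycle. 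Define an auxiliary graph $H$ on the $n$ edges of $P$ by joining two edges whenever they span a switcher. Then $e(H)\ge 2\eta n^2\cdot\frac{n-1}{n}$, which is about $2\eta n^2$. (Possibly one should just use $e(H)\ge \eta n^2$ to keep the constants safe.)

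\textbf{Step 2: many disjoint switchers.} The graph $H$ has $n$ vertices and maximum degree below $n$. A greedy argument therefore gives a matching in $H$ of size at least $e(H)/(2n)\ge \eta n/2$. Each greedy step deletes at most $2n$ edges of $H$. This yields $k\ge \eta n/2$ switchers $C_1,\dots,C_k$, each with one half in $P$ and pairwise vertex-disjoint.

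\textbf{Step 3: push the sum in one direction.} Because the $C_i$ are vertex-disjoint, each can be applied to $P$ independently, and each application still produces a perfect matching. For each $i$, let $\psi^{\max}_i$ and $\psi^{\min}_i$ be the halves of $C_i$ with the larger and smaller sign-sum. Let $P^+$ use $\psi^{\max}_i$ on every $C_i$, and let $P^-$ use $\psi^{\min}_i$ on every $C_i$; both agree with $P$ elsewhere. Then
\[
S_\sigma(P^+)-S_\sigma(P^-)\ \ge\ 2k\ \ge\ \eta n,
\]
so one of them has $|S_\sigma|\ge \eta n/2$. That matching is the required $I$ with $\disc(I)\ge\eta/2$.

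The only delicate point is the bookkeeping in Step 1. One must check that counting (matching, half-switcher) incidences does not overcount. Each $4$-cycle has exactly two halves, and each pair of $P$-edges determines at most one $4$-cycle. One must also keep enough slack in the $n(n-1)$ versus $n^2$ factors so that the final constant reaches $\eta/2$. The expectation bound gives roughly a factor of $2$ to spare, so I expect this to go through cleanly.
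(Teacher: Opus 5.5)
Your proof is correct, and its last step (orient all the disjoint switchers the same way, compare the two extreme matchings, keep the one with the larger $|S_\sigma|$) is exactly the paper's. The difference is in how you find the $\Omega(\eta n)$ vertex-disjoint switchers.

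The paper takes a maximal family $Q_1,\dots,Q_m$ of pairwise vertex-disjoint switchers. Every switcher meets some $Q_i$, and a fixed $4$-cycle meets fewer than $2n^3$ others, so $\eta n^4\le s(G)<2mn^3$. It then covers the remaining vertices with an arbitrary perfect matching $R$.

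You first use a random perfect matching $P$ to pack the switchers into an auxiliary graph on the $n$ edges of $P$, and then take a greedy matching there. The completing matching is already $P$, so no separate $R$ is needed.

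The paper's route is a single deterministic count. Yours needs a first-moment step but is sharper. The expectation is $2s(G)/(n(n-1))\ge 2\eta n^3/(n-1)\ge 2\eta n^2$. The correction factor is $n/(n-1)$, not $(n-1)/n$, so your hedge down to $e(H)\ge\eta n^2$ is unnecessary. Greedy then gives $k\ge \eta n$, hence $\disc(I)\ge\eta$, twice the stated bound.

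The gain comes from working inside a fixed $P$. Deleting the two $P$-edges of a chosen switcher destroys at most $2n$ of the at least $2\eta n^2$ candidates. The maximality argument instead charges each $Q_i$ for all (fewer than $2n^3$) four-cycles meeting it, against only $\eta n^4$ switchers.
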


\begin{proof}
Choose a maximal family of pairwise vertex-disjoint switchers $Q_1,\ldots,Q_m$.
A fixed $4$-cycle intersects fewer than $2n^3$ different $4$-cycles: indeed, after choosing one of its four vertices, there are at most $(n-1)\binom n2$ choices for a $4$-cycle through that vertex.  By maximality, every switcher intersects some $Q_i$.  Hence
\[
\eta n^4\le s(G)<2mn^3,
\]
so $m\ge \frac{\eta n}{2}$.
For each $i$, write the two perfect matchings of $Q_i$ as $Q_i^+$ and $Q_i^-$ such that
$S_\sigma(Q_i^+)>S_\sigma(Q_i^-)$.
Since $Q_i$ is a switcher,
$S_\sigma(Q_i^+)-S_\sigma(Q_i^-)\in\{2,4\}$.
Delete all vertices covered by $Q_1,\ldots,Q_m$.  The remaining complete bipartite graph has a perfect matching $R$.  Define
\[
M^+:=R\cup Q_1^+\cup\cdots\cup Q_m^+,
\qquad
M^-:=R\cup Q_1^-\cup\cdots\cup Q_m^-.
\]
Then $M^+$ and $M^-$ are perfect matchings and
\[
S_\sigma(M^+)-S_\sigma(M^-)
=\sum_{i=1}^m \big(S_\sigma(Q_i^+)-S_\sigma(Q_i^-)\big)
\ge 2m.
\]
Thus
\[\max\{|S_\sigma(M^+)|,|S_\sigma(M^-)|\}\ge \frac{|\big(S_\sigma(M^+)-S_\sigma(M^-)\big)|}{2}\ge m\ge \frac{\eta n}{2},\]
and therefore one of $M^+,M^-$ is the desired perfect matching, completing the proof.
\end{proof}

\begin{lemma}\label{lem:local-switch}
Let $G=(K_{n,n},\sigma)\in \mathcal{K}_{n,n}^{\pm}$ and $F\subseteq G$ a spanning $2$-factor of $G$. Suppose that among the connected components of $F$, at least $\alpha n$ are switcher copies of $C_4$.
Then $F$ can be decomposed into two perfect matchings $M,N$ such that $\disc(M),\disc(N)\ge \alpha/4-3/n$.
\end{lemma}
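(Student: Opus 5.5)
The plan is to exploit the fact that each component of $F$ is an even cycle, hence has exactly two perfect matchings. Choosing one matching of each component for $M$ (the other goes to $N$) gives a decomposition. Writing $T:=S_\sigma(F)$ and $D:=S_\sigma(M)-S_\sigma(N)$, we have
\[
S_\sigma(M)=\tfrac{T+D}{2},\qquad S_\sigma(N)=\tfrac{T-D}{2}.
\]
Set $k:=\alpha n/4-3$. Since $|M|=|N|=n$, it suffices to choose the orientations so that $|T+D|\ge 2k$ and $|T-D|\ge 2k$. Note that $T$ does not depend on the choice, and this pair of conditions says exactly that $\bigl||D|-|T|\bigr|\ge 2k$.

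First fix the orientation of every non-switcher component arbitrarily; their contribution to $D$ is some fixed value $D_0$. For the switcher $C_4$ components $Q_1,\dots,Q_m$ with $m\ge\alpha n$, let $d_i:=S_\sigma(\psi_1^{(i)})-S_\sigma(\psi_2^{(i)})\neq 0$. Since each matching of a $C_4$ has two edges, $|d_i|\in\{2,4\}$. Orienting $Q_i$ one way or the other contributes $\pm d_i$ to $D$. Let $\Sigma:=\sum_i|d_i|\ge 2\alpha n$. Start with every switcher oriented to contribute $-|d_i|$, so $D=D_0-\Sigma$, and flip them one at a time until $D=D_0+\Sigma$. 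The resulting sequence of achievable values of $D$ moves monotonically across the interval $J:=[D_0-\Sigma,D_0+\Sigma]$ of length $2\Sigma\ge 4\alpha n$, with consecutive values differing by $2|d_i|\le 8$.

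The bad set of $D$ is $B:=\{D:\ \bigl||D|-|T|\bigr|<2k\}$. It is the union of two open intervals centred at $\pm|T|$, each of length $4k<\alpha n$, possibly overlapping. Hence $J\setminus B$ has total length at least $4\alpha n-2\alpha n=2\alpha n$ and consists of at most three intervals. One of them therefore has length at least $2\alpha n/3$, which exceeds $8$ for $n$ large; the case $\alpha n$ bounded is trivial, since then $\alpha/4-3/n<0$. The walk of achievable values crosses this good interval with step at most $8$, so some achievable $D$ lies in it. That choice gives $|S_\sigma(M)|,|S_\sigma(N)|\ge k$, i.e.\ $\disc(M),\disc(N)\ge\alpha/4-3/n$.

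The only delicate step is this bookkeeping with endpoints: the open/closed boundaries of $B$, and the need for a good piece strictly longer than the step size. The $-3/n$ slack in $k$ is meant to absorb these losses.
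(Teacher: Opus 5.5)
Your proof is correct, and it takes a genuinely different route from the paper's.

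The paper starts from an arbitrary $1$-factorization $F=M_1\cup M_2$. It keeps only the larger sign class $\mathcal S$ of switchers, so it works with about $\alpha n/2$ of them, all shifting $S_\sigma(M_1)$ in the same direction. It then splits into two cases according to whether $\disc(M_2)<3\alpha/4$. In the first case it applies $\lceil\alpha n/2\rceil$ switchers, so the shift $h\ge\alpha n$ outweighs both initial sums. In the second case it uses a stopping rule to make $h\approx\alpha n/2$, which is enough to lift $M_1$ without destroying $M_2$.

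You instead use three ideas:
\begin{enumerate}
\item[(1)] $T=S_\sigma(F)$ is invariant, so $\min\{|S_\sigma(M)|,|S_\sigma(N)|\}=\bigl||T|-|D|\bigr|/2$.
\item[(2)] Resetting every switcher component to a common extreme orientation turns the achievable values of $D$ into a monotone chain with gaps at most $8$ across an interval $J$ of length at least $4\alpha n$.
\item[(3)] A length count against a bad set $B$ of measure less than $2\alpha n$ then locates a good value.
\end{enumerate}
Your handling of the closedness of the pieces of $J\setminus B$, and of the threshold $\alpha n\le 12$ (where $k\le 0$ and the claim is trivial), is fine.

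What your route buys is a case-free argument that uses all switchers rather than half of them, with room to spare. The same count with $k=\alpha n/2-3$ leaves $J\setminus B$ of total length at least $24$, hence a piece of length at least $8$, which gives $\disc\ge\alpha/2-3/n$. The paper's version is a more hands-on local-modification procedure that works from any starting factorization, at the cost of the cruder constant $\alpha/4$.
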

\begin{proof}
 Let $F=M_1\cup M_2$ be a $1$-factorization. If $\min\{\disc(M_1),\disc(M_2)\}\ge\alpha/4$ we are done, so assume without loss of generality that $\disc(M_1)\le\disc(M_2)$ and $\disc(M_1)<\alpha/4$.

For each switcher $Q$, let $\Delta_Q$ be the change in $S_\sigma(M_1)$ caused
by switching on $Q$, and the corresponding change in $S_\sigma(M_2)$ is $-\Delta_Q$. A direct
check shows that $\Delta_Q\in\{\pm 2,\pm 4\}$.
Partition the switchers according to the sign of $\Delta_Q$, and let
$\mathcal S$ be the larger of the two classes. Then $|\mathcal S|\ge \alpha n/2$.
Thus all switchers in $\mathcal S$ produce changes of the same sign. Therefore,
after applying any subfamily of switchers from $\mathcal S$, the resulting
matchings satisfy
\[
S_\sigma(M_1)\mapsto S_\sigma(M_1)\pm h,
\qquad
S_\sigma(M_2)\mapsto S_\sigma(M_2)\mp h,
\]
where the signs are fixed throughout, and where $h$ is the sum of the absolute
values of the changes. In particular, each applied switcher contributes either
$2$ or $4$ to $h$.

We now distinguish two cases.

\begin{mycase}{Case 1.}
  $\disc(M_2)<3\alpha/4$. 
\end{mycase}

Apply $\lceil \alpha n/2\rceil$ switchers from $\mathcal S$; this is possible
since $|\mathcal S|\ge \alpha n/2$. Let $T_1,T_2$ be the resulting matchings.
Then $h\ge 2\lceil \alpha n/2\rceil\ge \alpha n$.
Therefore, for $i\in [2]$, we have
\[
\disc(T_i)
=\frac{|S_\sigma(T_i)|}{n}
=\frac{|S_\sigma(M_i)\pm h|}{n}
\ge \frac{h-|S_\sigma(M_i)|}{n}
\ge \alpha-\disc(M_i)
> \frac{\alpha}{4}.
\]

\begin{mycase}{Case 2.}
  $\disc(M_2)\ge 3\alpha/4$.
\end{mycase}

Apply switchers from $\mathcal S$ one at a time, stopping when the accumulated
shift $h$ first satisfies $h\in\bigl[\lceil \alpha n/2\rceil-2,\ \lceil \alpha n/2\rceil+2\bigr]$.
Such a stopping time exists, since each switcher contributes either $2$ or $4$
to $h$. Let $T_1,T_2$ be the resulting matchings. Then
\[
\disc(T_1)
=\frac{|S_\sigma(T_1)|}{n}
=\frac{|S_\sigma(M_1)\pm h|}{n}
\ge \frac{h-|S_\sigma(M_1)|}{n}
\ge \frac{\alpha}{2}-\frac{2}{n}-\disc(M_1)
> \frac{\alpha}{4}-\frac{2}{n}.
\]
Moreover,
\[
\disc(T_2)
=\frac{|S_\sigma(T_2)|}{n}
=\frac{|S_\sigma(M_2)\mp h|}{n}
\ge \frac{|S_\sigma(M_2)|-h}{n}
\ge \disc(M_2)-\frac{\alpha}{2}-\frac{3}{n}
\ge \frac{\alpha}{4}-\frac{3}{n}.
\]
This completes the proof.
\end{proof}

\subsection{Proof of Theorem \ref{thm:low-disc-many-sw}}\label{subsec:ms4}

We will need to use the following lemma from \cite{ai2025high}.

\begin{lemma}[\cite{ai2025high}]\label{lem:permutation-concentration}
For every $\Delta,\xi,k$, there exist $n_0=n_0(\Delta,\xi,k)>0$ and $c=c(\Delta,\xi,k)>0$ such that the following holds for all $n\ge n_0$ and $p\in[0,1]$. Let $X$ be an $n$-element set and $\mathcal{F}\subseteq X^{(k)}$ be a set of tuples such that $|\mathcal{F}|\ge \xi n$, and let every $x\in X$ be contained in at most $\Delta$ elements of $\mathcal{F}$. Let $f:X^{(k)}\to\{-1,+1\}$ be a signing with $|f^{-1}(+1)|=pn^{(k)}$. Let $\pi\in S_X$ be a permutation chosen uniformly at random. Then with probability at least $1-e^{-cn}$, we have
\[
|f^{-1}(+1)\cap \pi(\mathcal{F})|=(p\pm\xi)|\mathcal{F}|.
\]
\end{lemma}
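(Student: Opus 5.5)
The plan is to show that the random count has the right mean and then apply Lemma~\ref{Talagrand}, following the template of Lemma~\ref{lem:cyclic-conc}. I read $X^{(k)}$ as the set of ordered $k$-tuples of distinct elements of $X$, so that $|X^{(k)}|=n^{(k)}=n(n-1)\cdots(n-k+1)$. A permutation $\pi\in S_X$ acts coordinatewise on tuples. Define
\[
h(\pi):=|f^{-1}(+1)\cap\pi(\mathcal F)|=\sum_{F\in\mathcal F}\mathbf 1[f(\pi(F))=+1],
\]
a nonnegative function on $S_X$.

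\textbf{Step 1 (mean).} For a fixed $F\in\mathcal F$, the tuple $\pi(F)$ is uniformly distributed on $X^{(k)}$. Hence $\mathbb P(f(\pi(F))=+1)=p$, and linearity gives $\mathbb E h=p|\mathcal F|$.

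\textbf{Step 2 (hypotheses of Lemma~\ref{Talagrand}).}
\begin{itemize}
\item Lipschitz condition. Suppose $\pi$ and $\pi'$ differ on $d$ coordinates. Each such coordinate $x$ lies in at most $\Delta$ tuples of $\mathcal F$, and only those tuples can change their indicator. Therefore $|h(\pi)-h(\pi')|\le\Delta d$, so we may take $c=\Delta$.
\item Certificate condition. Suppose $h(\pi)=s$. Let $S$ be the union of the coordinate sets of the $s$ tuples $F$ with $f(\pi(F))=+1$, so $|S|\le ks$. Any $\pi'$ agreeing with $\pi$ on $S$ maps each of these tuples to the same image, so $h(\pi')\ge s$. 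Thus we may take $r=k$.
\end{itemize}

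\textbf{Step 3 (concentration about the median).} Let $M$ be a median of $h$. Since $M\le|\mathcal F|\le\Delta n$, Lemma~\ref{Talagrand} gives, for every $t>0$,
\[
\mathbb P(|h-M|\ge t)\le 6\exp\!\Big(-\frac{t^2}{16k\Delta^2(|\mathcal F|+t)}\Big).
\]

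\textbf{Step 4 (median versus mean).} Integrating this tail bound exactly as in the proof of Lemma~\ref{lem:cyclic-conc} yields $|\mathbb Eh-M|\le C\sqrt{\Delta n}$ for a constant $C=C(k,\Delta)$. Because $|\mathcal F|\ge\xi n$, for $n\ge n_0$ this gives $|\mathbb Eh-M|\le\xi|\mathcal F|/2$.

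\textbf{Step 5 (conclusion).} Take $t=\xi|\mathcal F|/2$ and use $t\le|\mathcal F|$ in the denominator:
\[
\mathbb P\big(|h-p|\mathcal F||\ge\xi|\mathcal F|\big)\le 6\exp\!\Big(-\frac{\xi^2|\mathcal F|}{128k\Delta^2}\Big)\le 6e^{-\xi^3 n/(128k\Delta^2)}.
\]
The factor $6$ is absorbed by choosing $c=\xi^3/(256k\Delta^2)$ and $n$ large. This gives $h=(p\pm\xi)|\mathcal F|$ with probability at least $1-e^{-cn}$.

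The main point needing care is Step 4. The error from replacing the median by the mean must be $o(|\mathcal F|)$, and this holds precisely because $|\mathcal F|$ is linear in $n$ while the fluctuation scale is $O(\sqrt n)$. A second point is the certificate size, which must be linear in $s$ rather than in $|\mathcal F|$; this is why the certificate uses only the coordinates of the $+1$ tuples.
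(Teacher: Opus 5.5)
Your argument is correct. The paper quotes this lemma from \cite{ai2025high} without proof, so there is no in-paper proof to compare against. What you wrote follows exactly the template the paper uses for Lemma~\ref{lem:cyclic-conc}, now with $c=\Delta$ and $r=k$:
\begin{itemize}
\item verify the Lipschitz and certificate hypotheses of Lemma~\ref{Talagrand};
\item integrate the tail bound to pass from the median to the mean;
\item apply the tail bound once more at $t=\xi|\mathcal F|/2$.
\end{itemize}
Each step checks out, and $n_0$ and $c$ depend only on $\Delta,\xi,k$.

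One small point: in Step~5 the bound $t=\xi|\mathcal F|/2\le|\mathcal F|$ needs $\xi\le 2$. This is harmless, because for $\xi\ge 1$ the conclusion holds deterministically, since $0\le h\le|\mathcal F|$ and $p\in[0,1]$.

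It is also worth knowing that the certificate condition is not essential here, because $|\mathcal F|\ge\xi n$ is linear in $n$. A transposition changes $h$ by at most $2\Delta$, so the bounded-differences (Azuma) inequality for a uniform permutation gives
$\mathbb P\bigl(\bigl|h-p|\mathcal F|\bigr|\ge\xi|\mathcal F|\bigr)\le 2\exp\bigl(-\Omega(\xi^2|\mathcal F|^2/(\Delta^2 n))\bigr)\le e^{-\Omega(\xi^4 n/\Delta^2)}$.
This bound is directly about the mean, so your Step~4 disappears. The Talagrand route instead gives an exponent proportional to $|\mathcal F|$ rather than $|\mathcal F|^2/n$. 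That is $\xi^3 n$ versus $\xi^4 n$ when $|\mathcal F|\approx\xi n$, but for the lemma as stated either suffices.
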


We are now ready to prove Theorem \ref{thm:low-disc-many-sw}. We restate it for convenience:

\theoremb*

\begin{proof}
If $n$ is even, set $H:=G$ and $t:=n/2$. If $n$ is odd, apply Lemma~\ref{lem:seed} to obtain a perfect matching $I$ of $G$ with $\disc(I)\ge \eta/2$, relabel the vertices so that $I=\{x_i y_i:i\in\mathbb Z_n\}$, and set $H:=G-I$ and $t:=(n-1)/2$. Write the two vertex classes of $H$ as
\[
X=\{x_i:i\in\mathbb Z_n\},\qquad
Y=\{y_i:i\in\mathbb Z_n\}.
\]

The following two claims now hold.

\begin{claim}\label{cl:many-swe}
    If $n$ is even, then $H$ admits a $2$-factorization
    $E(H)=\bigcup_{j=1}^{t} E(F_j)$ with $s(F_j)\ge \eta n$ for every $j\in[t]$.
\end{claim}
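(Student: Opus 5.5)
The plan is to fix an explicit $C_4$-factorization of $K_{n,n}$, then relabel vertices at random and show that with positive probability every factor has many switcher components. Throughout I read $s(F_j)$ as the number of components of $F_j$ that are switcher $4$-cycles, which is what Lemma~\ref{lem:local-switch} needs.

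\textbf{The template.} Let $n=2m$, and index $X=\{x_{a,\epsilon}\}$ and $Y=\{y_{a,\epsilon}\}$ with $a\in\mathbb Z_m$ and $\epsilon\in\{0,1\}$. For $s\in\mathbb Z_m$, let $F_s$ be the union over $a\in\mathbb Z_m$ of the $4$-cycles (copies of $K_{2,2}$) on $\{x_{a,0},x_{a,1},y_{a+s,0},y_{a+s,1}\}$. Each $F_s$ is a spanning $2$-factor consisting of $m=n/2$ disjoint $C_4$'s. The family $\{F_s\}$ partitions $E(K_{n,n})$, since the edge $x_{a,\epsilon}y_{b,\delta}$ lies only in $F_{b-a}$. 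So $\{F_s\}$ is a $2$-factorization of $H=G$ into $t=n/2$ factors.

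\textbf{Randomization and expectation.} Pick independent uniform permutations $\pi_X$ of $X$ and $\pi_Y$ of $Y$, and relabel by them. Let $Z_s$ be the number of switcher components of the relabelled $F_s$. Each component is then a uniformly random $4$-cycle of $K_{n,n}$, i.e.\ a random pair of $X$-vertices together with a random pair of $Y$-vertices. $K_{n,n}$ has $\binom n2^2\le n^4/4$ four-cycles, so each component is a switcher with probability at least $s(G)/\binom n2^2\ge 4\eta$. Hence $\mathbb E Z_s\ge 4\eta\cdot n/2=2\eta n$ for every $s$.

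\textbf{Concentration (the main obstacle).} I would show $Z_s\ge \eta n$ for all $s$ simultaneously with positive probability. Changing $(\pi_X,\pi_Y)$ in $d$ coordinates changes $Z_s$ by at most $2d$, since each moved vertex affects one component. A value $Z_s=z$ is certified by the $4z$ coordinates describing the switcher components. So a Talagrand-type bound as in Lemma~\ref{Talagrand} applies with $c=2$ and $r=4$. This requires the product-of-symmetric-groups version, which holds in McDiarmid's framework.

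The median-versus-mean step then goes exactly as in the proof of Lemma~\ref{lem:cyclic-conc}. It gives $\mathbb P(|Z_s-\mathbb EZ_s|\ge n^{3/4})\le e^{-\Omega(\sqrt n)}$. A union bound over the $n/2$ values of $s$ yields a relabelling with $Z_s\ge 2\eta n-n^{3/4}\ge \eta n$ for all $s$, once $n$ is large.

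\textbf{Fallback.} If the product-space Talagrand inequality is to be avoided, I would instead use Lemma~\ref{lem:permutation-concentration}. Take a single uniform permutation of $V=X\cup Y$ and the family of $4$-tuples $(x_{a,0},x_{a,1},y_{a+s,0},y_{a+s,1})$, with $f=+1$ exactly on ordered tuples of type $(X,X,Y,Y)$ that span a switcher. The catch is that this loses a constant factor: a switcher spans only $4$ of the $(2n)^{(4)}\approx 16n^4$ ordered tuples, so $p\gtrsim\eta/4$ and the guarantee is only of order $\eta n/16$. It would still suffice after adjusting $\eta$ in Theorem~\ref{thm:low-disc-many-sw}, but it does not give the constant $\eta n$ in the claim as stated. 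That is why the side-preserving argument above is the primary route.
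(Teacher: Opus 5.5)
Your primary argument is correct, but it organizes the randomness differently from the paper.

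\textbf{The paper's route.} It works in two stages. First it fixes the pairings of $X$ and of $Y$ (its $\pi',\pi''$) by a first-moment argument, so that at least $\eta n^2$ of the $m^2$ ``block'' $4$-cycles are switchers. It records these in an $m\times m$ $0$-$1$ matrix $A$. Only then does it randomize a single permutation $\pi\in S_m$ of the blocks, and Lemma~\ref{lem:cyclic-conc} gives $h_k(\pi)\ge 2\eta n-3m^{3/4}\ge \eta n$ along every permuted diagonal.

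\textbf{Your route.} You randomize everything at once with independent $\pi_X,\pi_Y$. This makes the expectation a one-liner, since each component is a uniformly random $4$-cycle. The cost is that you need Talagrand's inequality on the product $S_X\times S_Y$. That version is available in McDiarmid's framework, as you say, but it is not the statement recorded in Lemma~\ref{Talagrand}. The paper's two-stage route avoids it and reuses Lemma~\ref{lem:cyclic-conc} verbatim.

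\textbf{They are the same construction.} A uniform $\pi_Y$ induces a uniform pairing of $Y$ together with an independent uniform ordering of the $Y$-blocks, and likewise for $X$. Conditioning your randomness on the two pairings leaves exactly the paper's single block permutation. So the paper's proof is yours with the pairing randomness averaged out first, which is what lets it stay within the single-permutation inequality.

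\textbf{Your fallback does not work as written.} A uniform permutation of $V=X\cup Y$ does not preserve the bipartition. The relabelled $F_s$ are therefore $2$-factors of the complete bipartite graph between $\pi(X)$ and $\pi(Y)$, not a decomposition of $E(H)$. The natural repair, one permutation of a common index set acting on both sides as in the paper's odd case, also fails for your template. For $s=0$ it produces only ``diagonal'' $4$-cycles $\{x_i,x_j,y_i,y_j\}$, and these need not contain any switchers. Since your main route stands, this does not affect the claim.
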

\begin{poc}
Let $m := n/2$. Choose $\pi',\pi'' \in S_n$ independently and uniformly at random. For each $i \in \mathbb{Z}_m$, contract $\{x_{\pi'(2i)}, x_{\pi'(2i+1)}\}$ into a single vertex $x_i'$ and $\{y_{\pi''(2i)}, y_{\pi''(2i+1)}\}$ into a single vertex $y_i'$, and set
\[
X'(\pi') = \{x_i' : i \in \mathbb{Z}_m\}, \qquad Y'(\pi'') = \{y_i' : i \in \mathbb{Z}_m\}.
\]
Any fixed pair $\{x_a,x_b\}$ is contracted with probability $\frac{m}{\binom{n}{2}} = \frac{1}{n-1}$, and the same holds on the $Y$-side, independently. Hence for any fixed switcher of $H$,
\[
\mathbb{P}\!\bigl[\text{both its $X$-part and $Y$-part are contracted}\bigr]
\;=\; \frac{1}{(n-1)^2}.
\]
Since $s(H) \ge \eta n^4$, linearity of expectation gives
\[
\mathbb{E}\bigl[\,\#\{\text{switchers with both parts contracted}\}\,\bigr]
\;\ge\; \frac{\eta n^4}{(n-1)^2} \;\ge\; \eta n^2.
\]
Fix $\pi',\pi''$ attaining at least this value, and write $X' = X'(\pi')$, $Y' = Y'(\pi'')$ from now on.

Define a $\{0,1\}$-matrix $A = (a_{ij})_{i,j \in \mathbb{Z}_m}$ by
\[
a_{ij} \;:=\; \mathbf{1}\!\Bigl[\text{the }C_4\text{ in $H$ on }
\{x_{\pi'(2i)}, x_{\pi'(2i+1)}, y_{\pi''(2j)}, y_{\pi''(2j+1)}\}\text{ is a switcher}\Bigr].
\]
Then $\sum_{i,j\in\mathbb Z_m} a_{ij} \ge \eta n^2$, so
\[
    \frac{1}{m}\sum_{i,j\in\mathbb Z_m} a_{ij} \;\ge\; 2\eta n.
\]
For $\pi \in S_m$ and $k\in \mathbb{Z}_m$, we define
\[
    h_k(\pi) \;:=\; \sum_{i\in\mathbb Z_m} a_{i,\pi(i+k)}.
\]
Applying Lemma~\ref{lem:cyclic-conc} to $A$, we obtain a permutation $\pi \in S_m$ such that
\[
    \left|\, h_k(\pi) - \frac{1}{m}\sum_{i,j\in\mathbb Z_m} a_{ij} \,\right|
    \;\le\; 3 m^{3/4}
    \qquad \text{for every } k\in\mathbb Z_m.
\]
In particular, for every $k\in\mathbb Z_m$,
\[
    h_k(\pi)
    \;\ge\;
    \frac{1}{m}\sum_{i,j\in\mathbb Z_m}a_{ij} - 3 m^{3/4}
    \;\ge\;
    2\eta n - 3 m^{3/4}
    \;\ge\; \eta n
\]
for all sufficiently large $n$.

By the definition of $a_{ij}$, for each $k \in \mathbb{Z}_m$ the edge set
\[
E(F_k) \;:=\; \bigcup_{i \in \mathbb{Z}_m} E\!\bigl(C_4 \text{ on } \{x_{\pi'(2i)},\, x_{\pi'(2i+1)},\, y_{\pi''(2\pi(i+k))},\, y_{\pi''(2\pi(i+k)+1)}\}\bigr)
\]
is a $C_4$-factor of $H$ with
\[
s(F_k) \;=\; h_k(\pi) \;\ge\; \eta n,
\]
and $\bigcup_{k \in \mathbb{Z}_m} F_k$ is a $C_4$-factorization of $H$. The family $\{F_k : k \in \mathbb{Z}_m\}$ is therefore the required $2$-factorization. 
\end{poc}

\begin{claim}\label{cl:many-swo}
    If $n$ is odd, then $H$ admits a $2$-factorization $E(H)=\bigcup_{j=1}^{t} E(F_j)$ with $s(F_j)\ge \eta n/2$ for every $j\in[t]$.
\end{claim}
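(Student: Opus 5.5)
Fix a $2$-factorization of $H=K_{n,n}-I$ of the shape in Lemma~\ref{lem:crown} and randomize it by relabelling vertices. Lemma~\ref{lem:crown}, applied with the diagonal matching $I=\{x_iy_i\}$, gives an edge-decomposition $E(H)=E(F'_1)\cup\dots\cup E(F'_t)$. Each $F'_j$ consists of $\tfrac{n-3}{2}$ (or $\tfrac{n-5}{2}$) copies of $C_4$ together with one $C_6$ (or $C_{10}$). The only freedom is a simultaneous relabelling of $X$ and $Y$ that preserves $I$, i.e.\ a permutation $\pi\in S_n$ acting by $x_i\mapsto x_{\pi(i)}$, $y_i\mapsto y_{\pi(i)}$. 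Such a $\pi$ is an automorphism of $H$, so $\{\pi(F'_j)\}$ is again a $2$-factorization of $H$. We will show that a random $\pi$ makes every $\pi(F'_j)$ contain at least $\eta n/2$ switcher $C_4$-components.

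The natural setting for this is a fixed $j$ and the family $\mathcal F_j$ of $C_4$-components of $F'_j$. Each such $C_4$ lies on vertices $x_a,x_b,y_c,y_d$ with $\{a,b\}\cap\{c,d\}=\emptyset$, since the edges of $I$ are absent. Indeed, by Proposition~\ref{prop:cycles} the lifted $C_4$'s come from a $C_4=v_1v_2v_3v_4$ in $K_n$ and have vertex sets $\{(v_1,0),(v_3,0),(v_2,1),(v_4,1)\}$ and the symmetric one. So each component is encoded by an ordered $4$-tuple of distinct indices $(a,b,c,d)\in\mathbb Z_n^{(4)}$. Define $f:\mathbb Z_n^{(4)}\to\{\pm1\}$ by $f(a,b,c,d)=+1$ if the $4$-cycle $x_ay_cx_by_d$ of $H$ is a switcher. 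Then $|\mathcal F_j|\ge (n-5)/2\ge n/3$, and every index lies in at most $2$ tuples (once on the $X$-side, once on the $Y$-side). So Lemma~\ref{lem:permutation-concentration} applies with $k=4$, $\Delta=2$ and a small $\xi$. With probability at least $1-e^{-cn}$, the number of switchers among $\pi(\mathcal F_j)$ is $(p\pm\xi)|\mathcal F_j|$, where $p$ is the proportion of $4$-tuples giving switchers. A union bound over the $t<n$ indices $j$ succeeds for large $n$.

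It remains to lower-bound $p$. Every switcher of $G$ with vertex sets $\{x_a,x_b\}$, $\{y_c,y_d\}$ and $\{a,b\}\cap\{c,d\}=\emptyset$ is a switcher of $H$ and corresponds to exactly $4$ tuples. The $4$-cycles with $\{a,b\}\cap\{c,d\}\ne\emptyset$ number $O(n^3)$. Therefore
\[
p\,n^{(4)}\ge 4\bigl(\eta n^4-O(n^3)\bigr),
\]
so $p\ge 4\eta-o(1)$ (up to the exact normalization of the tuple count, which has to be checked). Then $(p-\xi)|\mathcal F_j|\ge (4\eta-2\xi)(n-5)/2\ge \eta n/2$ for $\xi=\eta$ and $n$ large. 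This gives $s(\pi(F'_j))\ge\eta n/2$ for every $j$, with room to spare.

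The main obstacle is this bookkeeping between switchers of $G$ (counted as unordered $4$-cycles in $s(G)$) and ordered index tuples. It also has to be confirmed that the switcher property is invariant under the orderings that represent the same cycle, so that $f$ is well defined. If the constant turns out smaller than expected, one can recover it by restricting $f$ to a canonical ordering: set $f=-1$ on non-canonical tuples and accept $p\ge\eta-o(1)$, which still gives $(\eta-\xi)(n-5)/2\ge \eta n/2-\xi n$. The claim's constant $\eta n/2$ suggests that the slack comes from the $2\times$ tuple count. If needed, a slightly smaller constant still feeds into Lemma~\ref{lem:local-switch}, at the cost of adjusting the final bound in Theorem~\ref{thm:low-disc-many-sw}.
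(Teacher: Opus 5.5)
Your proposal is correct and follows essentially the same route as the paper: relabel a fixed decomposition from Lemma~\ref{lem:crown} by a uniformly random permutation applied simultaneously to the indices on both sides (an automorphism of $H$ fixing $I$), apply Lemma~\ref{lem:permutation-concentration} with $k=4$, $\Delta=2$ to the ordered $4$-tuples encoding the $C_4$-components of each $F_j$, and take a union bound over $j\in[t]$. Your hedges are unnecessary: the switcher property depends only on the cycle, each $4$-cycle of $H$ corresponds to exactly four ordered tuples of distinct indices, and $n^{(4)}\le n^4$, so $p\ge 4\eta-O(1/n)$. This is even slightly sharper than the paper's $p\ge 2\eta$, which comes from the cruder estimate $s(H)\ge \eta n^4/2$.
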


\begin{poc}
By Lemma~\ref{lem:crown}, $E(H) = \bigcup_{j=1}^{t} E(F_j)$ with $t = (n-1)/2$, where each $F_j$ is a spanning bipartite $2$-factor of $H$ containing at least $n/3$ many $C_4$-components (for $n$ sufficiently large).

\smallskip
Recall that $H = G - I$ for a perfect matching $I$ of $G$. Every edge of $G$ lies in at most $n^2$ switchers, since a switcher containing a given edge is determined by its remaining two vertices. Hence at most $|I|\cdot n^2 = n^3$ switchers of $G$ use an edge of $I$. Then, for $n$ large, we have
\[
s(H) \;\ge\; s(G) - n^3 \;\ge\; \eta n^4 - n^3 \;\ge\; \eta n^4/2.
\]

Let 
\[
\Sigma_{\mathrm{sw}}
:=
\bigl\{(a,b,c,d)\in \mathbb Z_n^{(4)}:
x_a y_b x_c y_d x_a \text{ is a switcher of } H
\bigr\}.
\]
Each switcher $x_a y_b x_c y_d x_a$ corresponds to exactly four ordered tuples in $\Sigma_{\mathrm{sw}}$, namely
\[
(a,b,c,d),\quad (a,d,c,b),\quad (c,d,a,b),\quad (c,b,a,d),
\]
so $|\Sigma_{\mathrm{sw}}| \;\ge\; 4\,s(H) \;\ge\; 2\eta n^4$. Since $n^{(4)} \le n^4$, the density satisfies
\[
p:=\frac{|\Sigma_{\mathrm{sw}}|}{n^{(4)}} \;\ge\; 2\eta.
\]

\smallskip
For each $j \in [t]$, choose one ordered tuple $(a,b,c,d)$ for each $C_4$-component of $F_j$ and collect them into a family $\mathcal C_j^0\subseteq \mathbb Z_n^{(4)}$. By Lemma~\ref{lem:crown}, $|\mathcal{C}_j^{0}| \ge n/3$,
and since $F_j$ is a $2$-factor, every index $a \in [n]$ appears in at most $2$ tuples of $\mathcal{C}_j^{0}$.

Let $\pi$ be a uniformly random permutation of $\mathbb Z_n$, acting
coordinatewise on $4$-tuples:
\[
\pi(\mathcal{C}_j^{0}) \;:=\; \bigl\{(\pi(a),\pi(b),\pi(c),\pi(d)) : (a,b,c,d) \in \mathcal{C}_j^{0}\bigr\}.
\]
Let $f:\mathbb Z_n^{(4)}\to \{+1,-1\}$ such that $f(x)=+1$ if $x\in \Sigma_{\mathrm{sw}}$ and $-1$ otherwise. Apply Lemma~\ref{lem:permutation-concentration} with $k=4$, $\Delta=2$, and $\xi=\eta/2$. Thus, there is a constant $c'>0$ such that for every $j\in [t]$, with probability at least $1 - e^{-c'n}$,
\[
|\pi(\mathcal{C}_j^{0}) \cap \Sigma_{\mathrm{sw}}|=|\pi(\mathcal{C}_j^{0}) \cap f^{-1}(+1)|
\;\ge\; \left(2\eta - \frac{\eta}{2}\right) |\mathcal{C}_j^{0}|
\;\ge\; \frac{3\eta}{2}\cdot\frac{n}{3}
\;=\; \frac{\eta n}{2}.
\]

\smallskip
A union bound over $j \in [t]$ shows that, with probability at least $1 - t \cdot e^{-c'n} = 1 - o(1)$, the above inequality holds simultaneously for every $j$. Fix such a $\pi$; the corresponding $2$-factorization of $H$ then has each $F_j$ containing at least $\eta n / 2$ switcher $C_4$-components, so $s(F_j) \ge \eta n / 2$ for every $j \in [t]$, as required.
\end{poc}

By Claims~\ref{cl:many-swe} and~\ref{cl:many-swo}, there is a $2$-factorization $E(H)=\bigcup_{j=1}^{t}E(F_j)$ such that $s(F_j)\ge \eta n/2$ for every $j\in[t]$.
Applying Lemma~\ref{lem:local-switch} to each $F_j$ with $\alpha=\eta/2$, we obtain a decomposition
\[
E(F_j)=E(M_j)\cup E(N_j)
\]
into two perfect matchings satisfying
\[
\disc(M_j),\ \disc(N_j)\ \ge\ \frac{\eta/2}{4}-\frac{3}{n}\ =\ \frac{\eta}{8}-\frac{3}{n}.
\]

If $n$ is even, then $E(G)=E(F_1)\cup\cdots\cup E(F_{n/2})$, and splitting each $F_j$ yields $2\cdot\tfrac{n}{2}=n$ perfect matchings, each of discrepancy at least $\frac{\eta}{8}-\frac{3}{n}$.

If $n$ is odd, then $E(G)=E(I)\cup E(F_1)\cup\cdots\cup E(F_{(n-1)/2})$, and splitting each $F_j$ yields $1+2\cdot\tfrac{n-1}{2}=n$ perfect matchings, each of discrepancy at least $\frac{\eta}{8}-\frac{3}{n}$. And the matching $I$ satisfies
\[
\disc(I)\ \ge\ \eta/2\ \ge\ \frac{\eta}{8}-\frac{3}{n},
\]
so it meets the same bound.

In either case, we obtain a $1$-factorization of $K_{n,n}$ in which every perfect matching has discrepancy at least $\frac{\eta}{8}-\frac{3}{n}$. This completes the proof. 
\end{proof}

\section{Low discrepancy and few switchers (proof of Theorem \ref{lem:3rdcase})}\label{sec:bothsmall}

For a sign matrix $A$, a switcher in $A$ is said to be of \textit{Type-$i$}
if the corresponding \(2\times2\) sign matrix has rank \(i\) (see Figure~\ref{fig:switchers}). We denote by
\(s_i(A)\) the number of Type-\(i\) switchers in \(A\). For \(G=(K_{n,n},\sigma)\), we write
\(s_i(G):=s_i(M_\sigma)\). Thus \(s(G)=s_1(G)+s_2(G)\).

\begin{figure}[H]
\centering
\begin{subfigure}[b]{0.3\textwidth}
\centering
\begin{tikzpicture}[
    scale=2,
    x=1cm,y=1cm,
    vertex/.style={circle, fill=black, inner sep=1.6pt},
    block/.style={rounded corners=8pt, draw=black, line width=0.8pt},
    posedge/.style={blue!70!black, line width=0.5pt, opacity=0.45},
    negedge/.style={red!75!black, line width=0.5pt, opacity=0.45}
]
  \node[vertex] (v1) at (0,0) {};
  \node[vertex] (v2) at (0,1) {};
  \node[vertex] (v3) at (1,1) {};
  \node[vertex] (v4) at (1,0) {};
  \draw[thick,red] (v1) to (v2);
  \draw[thick,blue] (v2) to (v3);
  \draw[thick,red] (v3) to (v4);
  \draw[thick,blue] (v1) to (v4);
  \node at (0.5,1.25) {$+1$};
  \node at (0.5,-0.25) {$+1$};
  \node at (-0.25,0.5) {$-1$};
  \node at (1.25,0.5) {$-1$};
\end{tikzpicture}
\caption{Type-$1$ switcher.}
\end{subfigure}
\begin{subfigure}[b]{0.45\textwidth}
\centering
\begin{tikzpicture}[
    scale=2,
    x=1cm,y=1cm,
    vertex/.style={circle, fill=black, inner sep=1.6pt},
    block/.style={rounded corners=8pt, draw=black, line width=0.8pt},
    posedge/.style={blue!70!black, line width=0.5pt, opacity=0.45},
    negedge/.style={red!75!black, line width=0.5pt, opacity=0.45}
]
  \node[vertex] (v1) at (0,0) {};
  \node[vertex] (v2) at (0,1) {};
  \node[vertex] (v3) at (1,1) {};
  \node[vertex] (v4) at (1,0) {};
  \node[vertex] (u1) at (2,0) {};
  \node[vertex] (u2) at (2,1) {};
  \node[vertex] (u3) at (3,1) {};
  \node[vertex] (u4) at (3,0) {};
  \draw[thick,blue] (v1) to (v2);
  \draw[thick,blue] (v2) to (v3);
  \draw[thick,red] (v3) to (v4);
  \draw[thick,blue] (v1) to (v4);
  \draw[thick,blue] (u1) to (u2);
  \draw[thick,red] (u2) to (u3);
  \draw[thick,red] (u3) to (u4);
  \draw[thick,red] (u1) to (u4);
  \node at (0.5,1.25) {$+1$};
  \node at (0.5,-0.25) {$+1$};
  \node at (-0.25,0.5) {$+1$};
  \node at (1.25,0.5) {$-1$};
  \node at (2.5,1.25) {$-1$};
  \node at (2.5,-0.25) {$-1$};
  \node at (1.75,0.5) {$+1$};
  \node at (3.25,0.5) {$-1$};
\end{tikzpicture}
\caption{Type-$2$ switchers.}
\end{subfigure}
\caption{The two switcher patterns.}
\label{fig:switchers}
\end{figure}

We briefly outline the strategy of this section. Type-\(2\) switchers control the fourth moments of
the singular values of \(M=M_\sigma\): we prove
$\sum_{i=1}^n \sigma_i^4=\operatorname{tr}((MM^T)^2)=n^4-8s_2(G)$.
Together with $\sum_i\sigma_i^2=n^2$, the few-switcher assumption forces $\sigma_1$ to be close
to $n$, and hence $M$ is close to a rank-one sign matrix $xy^T$. The Type-\(1\) switcher count
then shows that one of $x,y$ is almost constant, while the low-discrepancy assumption forces the
other to be close to balanced. Thus $M$ is close to either $\mathbf{1}\mathbf{z}^{T}$ or $\mathbf{z}\mathbf{1}^{T}$ for some balanced
sign vector $\mathbf{z}$. 

\theoremc*

\begin{proof}
Index the rows of $M$ by $[n]$, and for each $i \in [n]$ let $R_i$ denote the $i$-th row. Then the $(i,j)$-entry of $MM^T$ is
\[
    (MM^T)_{ij} = \langle R_i, R_j \rangle,
\]
and the $(i,i)$-entry of $(MM^T)^2$ satisfies
\[
    ((MM^T)^2)_{ii} = \sum_{j=1}^n \langle R_i, R_j \rangle^2.
\]
Let $\sigma_1 \geq \sigma_2 \geq \cdots \geq \sigma_n \geq 0$ be the singular values of $M$. Since the eigenvalues of $MM^T$ are $\sigma_1^2, \dots, \sigma_n^2$, taking the trace of $(MM^T)^2$ gives
\begin{equation}\label{eqn:traceofM}
    \sum_{i=1}^n \sigma_i^4 = \mathrm{tr}((MM^T)^2) = \sum_{i=1}^n\sum_{j=1}^n \langle R_i, R_j \rangle^2.
\end{equation}
We now establish the following claim.

\begin{claim}\label{claim:3-1}
    $\displaystyle\sum_{i=1}^n\sum_{j=1}^n \langle R_i, R_j \rangle^2 = n^4 - 8\cdot s_2(G)$.
\end{claim}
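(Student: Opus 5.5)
Expand the square of each inner product as a count over pairs of columns. For rows $i,j$ we have $\langle R_i,R_j\rangle=\sum_k M_{ik}M_{jk}$, so
\[
\sum_{i,j}\langle R_i,R_j\rangle^2=\sum_{i,j}\sum_{k,l} M_{ik}M_{jk}M_{il}M_{jl}=\sum_{i,j,k,l} P(i,j;k,l),
\]
where $P(i,j;k,l):=M_{ik}M_{il}M_{jk}M_{jl}\in\{\pm1\}$ is the product of the four entries of the $2\times 2$ submatrix on rows $\{i,j\}$ and columns $\{k,l\}$. The total number of ordered quadruples is $n^4$. Therefore it suffices to show
\[
\#\{(i,j,k,l): P=-1\}=4s_2(G),
\]
because then $\sum P=n^4-2\cdot 4s_2(G)=n^4-8s_2(G)$.

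\textbf{Degenerate quadruples.} If $i=j$ or $k=l$, every entry of the submatrix appears an even number of times in the product, so $P=+1$. These quadruples contribute nothing to the negative count.

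\textbf{Genuine submatrices.} Now take $i\neq j$ and $k\neq l$. The quadruple determines the $4$-cycle $x_iy_kx_jy_lx_i$. Each such $4$-cycle (equivalently, each unordered pair of rows and unordered pair of columns) arises from exactly $2\cdot 2=4$ ordered quadruples, and $P$ is the same for all four. It remains to check that $P=-1$ exactly when the $4$-cycle is a Type-$2$ switcher.

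\textbf{Case analysis on the $2\times 2$ sign matrix.} Its two perfect matchings are the diagonal $\{M_{ik},M_{jl}\}$ and the antidiagonal $\{M_{il},M_{jk}\}$.
\begin{itemize}[label={}]
\item \emph{$P=-1$.} Then an odd number of entries are $-1$. The diagonal product and the antidiagonal product have opposite signs, so one pair consists of equal entries and the other of unequal entries. The matching sums are then $\pm2$ versus $0$, so the cycle is a switcher. Since $\det=M_{ik}M_{jl}-M_{il}M_{jk}=\pm2\neq0$, the submatrix has rank $2$.
\item \emph{$P=+1$.} Then $\det\in\{0,\pm2\}$, and $\det=\pm2$ would force the two products to differ, i.e.\ $P=-1$. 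So $\det=0$ and the rank is $1$ (sign matrices never have rank $0$). Such a cycle is either a non-switcher or a Type-$1$ switcher, like the $\begin{smallmatrix}+&-\\-&+\end{smallmatrix}$-type pattern with matching sums $2$ and $-2$.
\end{itemize}
So $P=-1$ precisely on Type-$2$ switchers. This gives $4s_2(G)$ ordered quadruples with $P=-1$, which proves the claim.

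\textbf{Main point of care.} The only delicate step is matching the rank-based definition of Type with the parity of $P$. A $2\times2$ $\pm1$ matrix has rank $2$ iff its determinant is nonzero iff $P=-1$, and every such matrix is automatically a switcher. The rest is bookkeeping of the factor $4$ from ordering rows and columns.
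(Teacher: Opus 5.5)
Your proof is correct and follows essentially the same route as the paper: you expand $\sum_{i,j}\langle R_i,R_j\rangle^2$ as a sum over ordered quadruples of products of $2\times2$ submatrix entries, observe that the degenerate quadruples contribute $+1$, and count each Type-$2$ switcher four times with sign $-1$. Your determinant argument (the entry product is $-1$ exactly when $\det\neq0$, and every such submatrix is automatically a switcher) spells out the identification of odd parity with Type-$2$ switchers, which the paper simply asserts.
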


\begin{poc}
 Expanding the inner product, we have
\[
    \sum_{i=1}^n\sum_{j=1}^n \langle R_i, R_j \rangle^2
    = \sum_{i=1}^n\sum_{j=1}^n \left(\sum_{k=1}^n M_{ik}M_{jk}\right)^2
    = \sum_{i,j,k,\ell} M_{ik}M_{jk}M_{i\ell}M_{j\ell},
\]
where the last sum runs over all $i,j,k,\ell \in [n]$. We now determine the sign of each summand. If $i=j$, the summand equals $M_{ik}^2 M_{i\ell}^2 = 1$; if $k=\ell$, it equals $M_{ik}^2 M_{jk}^2 = 1$. Hence every term with $i=j$ or $k=\ell$ contributes $+1$.

Now suppose $i\neq j$ and $k\neq \ell$. Then $M_{ik}M_{jk}M_{i\ell}M_{j\ell}$ is the product of the four entries of the $2\times 2$ submatrix of $M$ with row indices $\{i,j\}$ and column indices $\{k,\ell\}$. Since $M \in \{\pm 1\}^{n\times n}$, this product equals $+1$ when the submatrix contains an even number of $-1$ entries, and $-1$ when it contains an odd number (i.e.\ exactly one or three) --- which is precisely the condition defining a type-2 switcher.

Each type-2 switcher is specified by an unordered pair of rows $\{i,j\}$ and an unordered pair of columns $\{k,\ell\}$, and so corresponds to $2 \cdot 2 = 4$ ordered tuples $(i,j,k,\ell)$, each contributing $-1$. Thus each type-2 switcher contributes $-4$ to the sum, while all other tuples contribute $+1$. Therefore
\[
    \sum_{i=1}^n\sum_{j=1}^n \langle R_i, R_j \rangle^2 = \bigl(n^4 - 4\,s_2(G)\bigr)\cdot(+1) + \,s_2(G)\cdot (-4) = n^4 - 8\,s_2(G),
\]
completing the proof.
\end{poc}
By Claim~\ref{claim:3-1}, equation~(\ref{eqn:traceofM}), and the assumption
$s(G) \leq \alpha^2 n^4/64$, we have
\begin{equation}\label{eqn:ubound-sv4}
    \sum_{i=1}^n \sigma_i^4 \geq \left(1 - \frac{\alpha^2}{8}\right)n^4.
\end{equation}
Since $(MM^T)_{ii} = \langle R_i, R_i \rangle = n$ for all $i \in [n]$, it follows that
$\sum_{i=1}^n \sigma_i^2 = \mathrm{tr}(MM^T) = n^2$. Thus, by \YZ{(\ref{eqn:ubound-sv4})},
\[
    \sigma_1^2 \cdot n^2 = \sigma_1^2 \sum_{i=1}^n \sigma_i^2 \geq \sum_{i=1}^n \sigma_i^4
    \geq \left(1 - \frac{\alpha^2}{8}\right)n^4,
\]
and therefore
\begin{equation}\label{lbound-sigma1}
    \sigma_1 \geq \sqrt{1 - \frac{\alpha^2}{8}}\cdot n.
\end{equation}
Let $\mathbf{u}$ and $\mathbf{v}$ be left and right unit singular vectors of $M$ corresponding
to $\sigma_1$, so that $\mathbf{u}^T M \mathbf{v} = \sigma_1$. Define sign vectors $\mathbf{x}$
and $\mathbf{y}$ componentwise by $x_i = \operatorname{sgn}(u_i)$ and $y_j =
\operatorname{sgn}(v_j)$, with $\operatorname{sgn}(0) = 1$. For $i, j \in [n]$,
let $\delta_{ij} = \mathbf{1}[x_i y_j = M_{ij}]$ be the indicator that $x_i y_j$ agrees with
$M_{ij}$. Then by~(\ref{lbound-sigma1}),
\begin{equation}\label{eqn:lbound-delta}
    \sqrt{1 - \frac{\alpha^2}{8}}\cdot n \leq \sigma_1 = \mathbf{u}^T M \mathbf{v}
    = \sum_{i=1}^n \sum_{j=1}^n |u_i||v_j| x_i y_j M_{ij}
    = \sum_{i=1}^n \sum_{j=1}^n |u_i||v_j|\,(2\delta_{ij}-1).
\end{equation}
For each $i \in [n]$, define $t_i = \frac{1}{\sqrt{n}} - |u_i|$ and $t_i' =
\frac{1}{\sqrt{n}} - |v_i|$. We now establish the following claim.

\begin{claim}\label{claim:3-2}
    $\displaystyle\sum_{i=1}^n t_i^2 \leq \alpha^2/4$ and
    $\displaystyle\sum_{i=1}^n t_i'^2 \leq \alpha^2/4$.
\end{claim}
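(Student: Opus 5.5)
We want $\sum_i t_i^2 = \sum_i (1/\sqrt n - |u_i|)^2 \le \alpha^2/4$. The plan is to express this sum through the correlation $\langle |\mathbf u|, \mathbf 1/\sqrt n\rangle$ and bound that correlation below using \eqref{eqn:lbound-delta}.

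Since $\mathbf u$ is a unit vector, expanding the square gives
\[
\sum_i t_i^2 = 1 + 1 - \frac{2}{\sqrt n}\sum_i |u_i| = 2 - \frac{2}{\sqrt n}\,\|\mathbf u\|_1 .
\]
So it suffices to show $\|\mathbf u\|_1 \ge \sqrt n\,(1-\alpha^2/8)$, and similarly $\|\mathbf v\|_1\ge \sqrt n\,(1-\alpha^2/8)$.

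For the lower bound, start from \eqref{eqn:lbound-delta} and use $2\delta_{ij}-1\le 1$:
\[
\sigma_1 \le \sum_{i,j}|u_i||v_j| = \|\mathbf u\|_1\|\mathbf v\|_1 .
\]
By Cauchy--Schwarz, $\|\mathbf v\|_1\le \sqrt n\,\|\mathbf v\|_2=\sqrt n$. Combining with \eqref{lbound-sigma1} yields
\[
\|\mathbf u\|_1 \;\ge\; \frac{\sigma_1}{\sqrt n} \;\ge\; \sqrt{1-\tfrac{\alpha^2}{8}}\,\sqrt n \;\ge\; \Bigl(1-\tfrac{\alpha^2}{8}\Bigr)\sqrt n,
\]
where the last step uses $\sqrt{1-x}\ge 1-x$ for $x\in[0,1]$. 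Substituting into the identity gives
\[
\sum_i t_i^2 \;\le\; 2-2\Bigl(1-\tfrac{\alpha^2}{8}\Bigr) = \tfrac{\alpha^2}{4}.
\]
The argument for $t_i'$ is symmetric: swap the roles of $\mathbf u$ and $\mathbf v$, and use $\|\mathbf u\|_1\le\sqrt n$.

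No step is really an obstacle. The one point needing care is to avoid the cruder bound $\sqrt{1-x}\ge 1-x/2$ in the wrong direction. Using $\sqrt{1-x}\ge 1-x$ (with $x=\alpha^2/8$) gives exactly the constant $\alpha^2/4$; the sharper $1-x/2$ would even give $\alpha^2/8$. So the stated bound holds with room to spare.
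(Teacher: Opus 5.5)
Your proof is correct and is essentially the paper's argument: use $2\delta_{ij}-1\le 1$ to get $\sigma_1\le\|\mathbf{u}\|_1\|\mathbf{v}\|_1$, apply Cauchy--Schwarz to $\|\mathbf{v}\|_1$, and expand $\sum_i t_i^2=2-\frac{2}{\sqrt n}\|\mathbf{u}\|_1$. The only difference is cosmetic: the paper finishes with $2\bigl(1-\sqrt{1-x}\bigr)=\frac{2x}{1+\sqrt{1-x}}\le 2x$, whereas you use the equivalent bound $\sqrt{1-x}\ge 1-x$.

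One correction to your closing aside, which does not affect the proof: $\sqrt{1-x}\ge 1-x/2$ is false for $x\in(0,1]$, since concavity gives $\sqrt{1-x}\le 1-x/2$. So it cannot give exactly $\alpha^2/8$; the sharp value $\frac{2x}{1+\sqrt{1-x}}$ is slightly larger than $x=\alpha^2/8$.
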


\begin{poc}
    We prove only the first inequality, as the second follows by a similar argument.
    Since $2\delta_{ij}-1 \leq 1$, equation~(\ref{eqn:lbound-delta}) gives
    \[
        \sqrt{1 - \frac{\alpha^2}{8}}\cdot n
        \leq \sum_{i=1}^n\sum_{j=1}^n |u_i||v_j|
        = \left(\sum_{i=1}^n |u_i|\right)\left(\sum_{j=1}^n |v_j|\right).
    \]
    By the Cauchy--Schwarz inequality, $\sum_{j=1}^n |v_j| \leq \sqrt{n}\,\|\mathbf{v}\|_2 = \sqrt{n}$, so
    \(
        \sum_{i=1}^n |u_i| \geq \sqrt{\left(1 - \frac{\alpha^2}{8}\right)n}.
    \)
    Therefore, using $\sum_{i=1}^n u_i^2 = \|\mathbf{u}\|_2^2 = 1$,
    \[
        \sum_{i=1}^n t_i^2
        = \sum_{i=1}^n \left(\frac{1}{\sqrt{n}} - |u_i|\right)^2
        = \frac{1}{n}\cdot n - \frac{2}{\sqrt{n}}\sum_{i=1}^n |u_i| + 1
        \leq 2\left(1 - \sqrt{1 - \frac{\alpha^2}{8}}\right)
        = \frac{\alpha^2/4}{1 + \sqrt{1-\frac{\alpha^2}{8}}}
        \leq \alpha^2/4,
    \]
    completing the proof.
\end{poc}

By Claim~\ref{claim:3-2}, and using $|u_i| = \frac{1}{\sqrt{n}} - t_i$ and
$|v_j| = \frac{1}{\sqrt{n}} - t_j'$, we have
\begin{eqnarray*}
    \sum_{i=1}^n\sum_{j=1}^n |u_i||v_j|(2\delta_{ij}-1)
    &=& \sum_{i=1}^n \sum_{j=1}^n \left(\frac{1}{\sqrt{n}}-t_i\right)
           \left(\frac{1}{\sqrt{n}}-t_j'\right)(2\delta_{ij}-1)\\
    &=& \sum_{i=1}^n\sum_{j=1}^n \frac{2\delta_{ij}-1}{n}
        + \sum_{i=1}^n\sum_{j=1}^n\left(-\frac{t_i+t_j'}{\sqrt{n}}
        +t_it_j'\right)(2\delta_{ij}-1)\\
    &\leq& \sum_{i=1}^n\sum_{j=1}^n \frac{2\delta_{ij}-1}{n}
        + \sum_{i=1}^n\sum_{j=1}^n\left(\frac{|t_i|+|t_j'|}{\sqrt{n}}
        +|t_i||t_j'|\right)\\  
    &\leq& \frac{1}{n}\sum_{i=1}^n\sum_{j=1}^n(2\delta_{ij}-1)
           +n\left(\sqrt{\sum_{i=1}^n t_i^2}+\sqrt{\sum_{i=1}^n (t'_i)^2}+\sqrt{(\sum_{i=1}^n t_i^2)(\sum_{i=1}^n (t'_i)^2)}\right) \\
    &\leq& \frac{1}{n}\sum_{i=1}^n\sum_{j=1}^n(2\delta_{ij}-1)
           +\alpha n
           +\frac{\alpha^2}{4}\,n \quad (\text{by Claim \ref{claim:3-2}})\\
    &\leq& \frac{1}{n}\sum_{i=1}^n\sum_{j=1}^n(2\delta_{ij}-1)
           +2\alpha n,
\end{eqnarray*}
where the first inequality holds as $2\delta_{ij}-1$ equals either 1 or $-1$, the second inequality applies Cauchy--Schwarz. Combining this with (\ref{eqn:lbound-delta}) yields
\[
    \sum_{i=1}^n\sum_{j=1}^n \delta_{ij}
    \geq \frac{1}{2}\left(1 + \sqrt{1-\frac{\alpha^2}{8}}
    - 2\alpha\right)n^2
    \geq \left(1 - 2\alpha\right)n^2.
\]
In other words, at most a $2\alpha$-fraction of the
entries of $\mathbf{x}\mathbf{y}^T$ disagree with the corresponding entries of $M$. Let
$x^+, x^-$ and $y^+, y^-$ denote the number of $+1$ and $-1$ entries in $\mathbf{x}$ and
$\mathbf{y}$, respectively. We now establish the following claim.

\begin{claim}\label{claim:3-3(almostall1)}
At least one of \(\mathbf{x}\) and \(\mathbf{y}\) has at least
\((1-3\sqrt{\alpha})n\) entries of the same sign.
\end{claim}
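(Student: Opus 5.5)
We argue by contradiction: if both $\mathbf{x}$ and $\mathbf{y}$ are far from constant, the rank-one pattern $\mathbf{x}\mathbf{y}^T$ contains many Type-$1$ switchers, and since $M$ agrees with $\mathbf{x}\mathbf{y}^T$ on all but at most $2\alpha n^2$ entries, many of these survive in $M$. That contradicts $s_1(G)\le s(G)\le \alpha^2n^4/64$.

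Suppose that $\min\{x^+,x^-\}> 3\sqrt{\alpha}\,n$ and $\min\{y^+,y^-\}> 3\sqrt{\alpha}\,n$. Consider the $2\times 2$ submatrices with row pair $\{i,i'\}$ where $x_i=+1,\ x_{i'}=-1$, and column pair $\{j,j'\}$ where $y_j=+1,\ y_{j'}=-1$. On such a submatrix $\mathbf{x}\mathbf{y}^T$ equals $\begin{pmatrix}+1&-1\\-1&+1\end{pmatrix}$. This is a rank-one matrix whose two perfect matchings have sums $+2$ and $-2$, so it is a Type-$1$ switcher. The number of such quadruples is
\[
x^+x^-y^+y^- > (3\sqrt{\alpha}\,n)^2\cdot \tfrac{n}{2}\cdot(3\sqrt\alpha n)\cdot\ldots,
\]
but more cleanly, since each factor pair satisfies $x^+x^-\ge 3\sqrt\alpha n\,(1-3\sqrt\alpha)n$, we get
\[
x^+x^-\,y^+y^-\ge 9\alpha(1-3\sqrt\alpha)^2n^4\ge 5\alpha n^4,
\]
using $\alpha\le 10^{-2}$.

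Next we bound how many of these quadruples $M$ can destroy. Each such $2\times2$ submatrix that is not a switcher of $M$ must contain an entry where $M$ disagrees with $\mathbf{x}\mathbf{y}^T$. A fixed disagreeing entry $(i,j)$ lies in at most $n^2$ such submatrices, since the quadruple is determined by the remaining row $i'$ and column $j'$. There are at most $2\alpha n^2$ disagreeing entries, so at most $2\alpha n^4$ quadruples are destroyed.

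It follows that
\[
s_1(G)\ge 5\alpha n^4-2\alpha n^4=3\alpha n^4 > \alpha^2n^4/64,
\]
which contradicts the assumption. Hence one of $\mathbf{x},\mathbf{y}$ has at least $(1-3\sqrt\alpha)n$ entries of one sign.

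The only delicate step is confirming that the surviving submatrices really are switchers of $M$. An undamaged submatrix coincides exactly with the pattern $\begin{pmatrix}+1&-1\\-1&+1\end{pmatrix}$, whose matching sums differ, so it is a switcher of $M$; in fact it is of Type-$1$, although any switcher would suffice. The counting has plenty of slack; the constant $3\sqrt\alpha$ is chosen so that $9\alpha$ dominates $2\alpha$.
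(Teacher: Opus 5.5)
Your argument is the same as the paper's: both count the $x^+x^-y^+y^-$ Type-1 switchers of $\mathbf{x}\mathbf{y}^T$, discard at most $2\alpha n^4$ of them because of the at most $2\alpha n^2$ disagreeing entries, and contradict $s_1(G)\le\alpha^2n^4/64$. One small slip: $9\alpha(1-3\sqrt\alpha)^2\ge 5\alpha$ fails near $\alpha=10^{-2}$, where it gives only $4.41\alpha$. Use $>4\alpha n^4$ as the paper does, which still yields $s_1(G)>2\alpha n^4>\alpha^2n^4/64$, and delete the abandoned garbled display.
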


\begin{poc}
   Since every entry of an $n \times n$ matrix participates in at most $n^2$ distinct $2 \times 2$ submatrices, and there are at most \(2\alpha n^2\) positions at which
\(M\) and \(\mathbf{x}\mathbf{y}^T\) differ, it follows that
\[
|s_1(\mathbf{x}\mathbf{y}^T) - s_1(M)| \leq 2\alpha n^4.
\]
Suppose for contradiction that $\min\{x^+, x^-, y^+, y^-\} > 3\sqrt{\alpha} n$. Then, as $\alpha< 0.01$, we have
\[
s_1(\mathbf{x}\mathbf{y}^T) = x^+ x^- y^+ y^-> (1-3\sqrt{\alpha})^2(3\sqrt{\alpha})^2n^4> 4\alpha n^4.
\]
On the other hand, since $s_1(M)\leq s(M) \leq \alpha^2 n^4/64$, the bound above gives
\[
s_1(\mathbf{x}\mathbf{y}^T) \leq s_1(M) + 2\alpha n^4 \leq 3\alpha n^4,
\]
a contradiction. This completes the proof of the claim. 
\end{poc}

By Claim \ref{claim:3-3(almostall1)}, at least one of \(\mathbf{x}\) and
\(\mathbf{y}\) has at least \((1-3\sqrt{\alpha})n\) entries of the same
sign. 
By symmetry, we consider only the case where this holds for $\mathbf{x}$. 
Multiplying both \(\mathbf{x}\) and \(\mathbf{y}\) by \(-1\) if necessary, we may assume
\(
    x^+\ge (1-3\sqrt{\alpha})n.
\)
The following claim now holds.

\begin{claim}\label{claim:3-4(yisbalanced)}
 $|y^+-y^-|\le 12\alpha n$.
\end{claim}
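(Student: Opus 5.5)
The plan is to compare the total sum of entries of $M$ with that of $\mathbf{x}\mathbf{y}^T$. By the definition of $\disc(G)$ and the hypothesis $\disc(G)\le\alpha/3$, the total sum satisfies
\[
\Bigl|\sum_{i,j}M_{ij}\Bigr| = \disc(G)\,n^2\le \frac{\alpha}{3}n^2 .
\]
The rank-one matrix $\mathbf{x}\mathbf{y}^T$ has entry sum $(x^+-x^-)(y^+-y^-)$. We showed above that $\sum_{i,j}\delta_{ij}\ge(1-2\alpha)n^2$, so $M$ and $\mathbf{x}\mathbf{y}^T$ differ in at most $2\alpha n^2$ entries. Each disagreement changes the sum by exactly $2$, which gives
\[
\bigl|(x^+-x^-)(y^+-y^-)\bigr|\le \Bigl|\sum_{i,j}M_{ij}\Bigr|+4\alpha n^2\le \Bigl(\frac{\alpha}{3}+4\alpha\Bigr)n^2=\frac{13\alpha}{3}n^2 .
\]

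Next I lower-bound the factor coming from $\mathbf{x}$. Since $x^+\ge(1-3\sqrt{\alpha})n$, we get $x^+-x^-\ge(1-6\sqrt{\alpha})n$, and $\alpha\le10^{-2}$ gives $1-6\sqrt{\alpha}\ge 0.4$. Dividing,
\[
|y^+-y^-|\le \frac{13\alpha/3}{1-6\sqrt{\alpha}}\,n\le \frac{13\alpha}{3\cdot 0.4}\,n<11\alpha n\le 12\alpha n,
\]
which is the claim.

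The only delicate point is the constant bookkeeping. This means using the $2\alpha$ bound on disagreements rather than anything weaker, and checking that $1-6\sqrt{\alpha}$ stays bounded away from zero under $\alpha\le 10^{-2}$. Both are immediate, so I expect no real obstacle. The symmetric case, where $\mathbf{y}$ is the nearly constant vector, is handled identically with the roles of $\mathbf{x}$ and $\mathbf{y}$ swapped.
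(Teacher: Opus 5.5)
Your proof is correct and follows the paper's argument essentially verbatim. The paper also bounds $\disc(\mathbf{x}\mathbf{y}^T)\le \disc(M)+4\alpha\le 13\alpha/3$ using the at most $2\alpha n^2$ disagreements, factors this as $\frac{|x^+-x^-|}{n}\cdot\frac{|y^+-y^-|}{n}$, and divides by $1-6\sqrt{\alpha}\ge 2/5$; your explicit check $\frac{13}{3\cdot 0.4}<11$ just makes the final constant slightly more transparent.
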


\begin{poc}
Let \(A=\mathbf{x}\mathbf{y}^T\). Since \(M\) and \(A\) differ in at most
\(2\alpha n^2\) entries, their discrepancies differ by at most
\(4\alpha\). Hence
\[
    \disc(A)\le \disc(M)+4\alpha
    \le \alpha/3+4\alpha
    =13\alpha/3.
\]
On the other hand,
\(
    \disc(A)
    =
    \frac{|x^+-x^-|}{n}\cdot \frac{|y^+-y^-|}{n}.
\)
Since \(x^+\ge (1-3\sqrt{\alpha})n\), we have
\[
    \frac{|x^+-x^-|}{n}\ge 1-6\sqrt{\alpha}.
\]
As \(\alpha\le 10^{-2}\), we have \(1-6\sqrt{\alpha}\ge 2/5\).
Therefore
\(
    \frac{|y^+-y^-|}{n}
    \le
    \frac{13\alpha/3}{1-6\sqrt{\alpha}}
    \le 12\alpha.
\)
This proves the claim.
\end{poc}

 By Claim \ref{claim:3-4(yisbalanced)}, there exists a balanced vector
\(\mathbf{z}\in\{\pm1\}^n\) which differs from \(\mathbf{y}\) in at most
\(6\alpha n+1\) coordinates. Also, since
\(x^+\ge (1-3\sqrt{\alpha})n\), the matrix
\(\mathbf{x}\mathbf{y}^T\) differs from \(\mathbf{1}\mathbf{y}^T\) in at most
\(3\sqrt{\alpha}n^2\) entries. Hence
\(\mathbf{x}\mathbf{y}^T\) differs from \(\mathbf{1}\mathbf{z}^T\) in at most
\(
    3\sqrt{\alpha}n^2+(6\alpha n+1)n
\)
entries. Since \(M\) differs from \(\mathbf{x}\mathbf{y}^T\) in at most
\(2\alpha n^2\) entries, we obtain
\[
    |\{(i,j):M_{ij}\ne z_j\}|
    \le
    \left(3\sqrt{\alpha}+8\alpha+1/n\right)n^2\le 4\sqrt{\alpha}n^2,
\]
as \(\alpha\le 10^{-2}\) and \(n\) is sufficiently large.
Thus \(M\) is \(4\sqrt{\alpha}\)-close to \(\mathbf{1}\mathbf{z}^T\).

If instead Claim~\ref{claim:3-3(almostall1)} gives that \(\mathbf{y}\) has at
least \((1-3\sqrt{\alpha})n\) entries of the same sign, then the same
argument with $\mathbf{x}$ and $\mathbf{y}$ interchanged shows that \(M\) is
\(4\sqrt{\alpha}\)-close to \(\mathbf{z}\mathbf{1}^T\) for some balanced
\(\mathbf{z}\in\{\pm1\}^n\).
This completes the proof.
\end{proof}

\bibliographystyle{abbrv}
\bibliography{refs.bib}

\end{document}